\documentclass[11pt,reqno]{amsart}

\usepackage{amsmath, graphicx}
\usepackage{amsthm}

\usepackage{amsfonts, amssymb}
\usepackage{mathrsfs}

\setcounter{MaxMatrixCols}{30}
\providecommand{\U}[1]{\protect\rule{.1in}{.1in}}
\numberwithin{equation}{section}

\newtheorem{theorem}{Theorem}[section]
\newtheorem{corollary}[theorem]{Corollary}
\newtheorem{lemma}[theorem]{Lemma}
\newtheorem{proposition}[theorem]{Proposition}

\theoremstyle{definition}
\newtheorem{definition}[theorem]{Definition}
\newtheorem{remark}[theorem]{Remark}
\newtheorem{example}[theorem]{Example}

\setlength{\textheight}{220mm} \setlength{\textwidth}{160mm}
\setlength{\oddsidemargin}{2mm} \setlength{\evensidemargin}{2mm}
\setlength{\topmargin}{0mm}

\setlength{\parskip}{1ex plus 0.5ex minus 0.2ex}

\title{Rectifiability of Self-contracted curves \\in the euclidean space and applications}

\author{A. Daniilidis, G. David, E. Durand-Cartagena, A. Lemenant }

\keywords{Self-contracted curve, rectifiable curve, convex
foliation, secant, self-expanded curve, proximal algorithm.}

\subjclass[2000]{\textit{Primary }53A04 ;
\textit{Secondary }37N40, 49J52, 49J53, 52A10, 65K10}

\begin{document}


\maketitle

\begin{abstract}It is hereby established that, in
Euclidean spaces of finite dimension, bounded self-contracted curves
have finite length. This extends the main result of \cite{DLS}
concerning \emph{continuous planar} self-contracted curves to any
dimension, and dispenses entirely with the continuity requirement.
The proof borrows heavily from a geometric idea of \cite{MP}
employed for the study of regular enough curves, and can be seen as
a nonsmooth adaptation of the latter, albeit a nontrivial one.
Applications to continuous and discrete dynamical systems are
discussed: continuous self-contracted curves appear as generalized
solutions of nonsmooth convex foliation systems, recovering a hidden
regularity after reparameterization, as consequence of our main
result. In the discrete case, proximal sequences (obtained through
implicit discretization of a gradient system) give rise to polygonal
self-contracted curves. This yields a straightforward proof for the
convergence of the exact proximal algorithm, under any choice of
parameters.
\end{abstract}

\tableofcontents
\newpage

\section{Introduction}

\subsection{Motivation and state-of-the-art}

Self-contracted curves were introduced in
\cite[Definition~1.2.]{DLS} to provide a unified framework for the
study of convex and quasiconvex gradient dynamical systems. Given a
possibly unbounded interval $I$ of ${\mathbb{R}}$, a map
$\gamma:I\rightarrow{\mathbb{R}}^{n}$ is called
\emph{self-contracted curve}, if for every $[a,b]\subset I$, the
real-valued function
\[
t\in\lbrack a,b]\mapsto d(\gamma(t),\gamma(b))
\]
is non-increasing. This notion is purely metric and does not require any prior
smoothness/continuity assumption on $\gamma$.

So far, self-contracted curves are considered in a Euclidean
framework. In particular, given a smooth function
$f:{\mathbb{R}}^{n}\rightarrow{\mathbb{R}}$, any solution $\gamma$
of the gradient system
\begin{equation}
\left\{
\begin{array}
[c]{l}
\gamma^{\prime}(t)=-\nabla f(\gamma(t))\qquad t>0,\\[4pt]
\gamma(0)=x_{0}\in{\mathbb{R}}^{n}
\end{array}
\right.  \label{diffeq}
\end{equation}
is a (smooth) self-contracted curve, provided that $f$ is \emph{quasiconvex}, that
is, its sublevel sets
\[
\lbrack f\leq\beta]:=\{x\in{\mathbb{R}}^{n}:f(x)\leq\beta\}\quad(\beta
\in{\mathbb{R}})
\]
are convex subsets of ${\mathbb{R}}^{n}$
\cite[Proposition~6.2]{DLS}. Self-contracted curves also appear in
subgradient systems, defined by a (nonsmooth) \emph{convex}
function~$f$, see \cite[Proposition~6.4]{DLS}. In this case, the
first equation in~\eqref{diffeq} becomes the following differential
inclusion
\begin{equation}
\gamma^{\prime}(t)\in-\partial f(\gamma(t))\quad\mathrm{a.e.,}
\label{subgrad}
\end{equation}
where $\mathrm{a.e.}$ stands for ``almost everywhere'' and the
solutions are absolutely continuous curves (see~\cite{Brezis} for a
general theory).

A central question of the asymptotic theory of a general gradient
dynamical system of the form \eqref{diffeq} is whether or not
bounded orbits are of finite length, which if true, yields in
particular their convergence. This property fails for $C^{\infty}$
smooth functions (\cite[p.~12]{PalDem82}), but holds for analytic
gradient systems (\cite{Loja63}), or more generally, for systems
defined by an o-minimal (tame) function (\cite{Kurdyka98}). In these
cases, a concrete estimation of the length of (sub)gradient curves
is obtained in terms of a so-called \L ojasiewicz type inequality,
intrinsically linked to the potential $f$, see \cite{BDLM} for a
survey, which also includes extensions of the theory to subgradient
systems. Notice however that convex functions fail to satisfy such
inequality, see \cite[Section~4.3]{BDLM} for a counterexample.

In \cite{MP} a certain class of Lipschitz curves has been introduced
(with no specific name)  to capture the behaviour of orbits of
quasiconvex potentials. Unlike self-contracted curves, this notion
makes sense only in Euclidean spaces (it uses orthogonality) and
requires the curve to be Lipschitz continuous. The main result of
\cite{MP} asserts that the length of such  curves is bounded by the
mean width of their convex hull, a fortiori, by the mean width of
any convex set containing the curve. Recently, the authors of
\cite{GS} extended the result of \cite{MP} to 2-dimensional surfaces
of constant curvature, naming these curves as (G)-orbits. We shall
prefer to call these curves \emph{self-expanding}, see
Definition~\ref{Def_EC} below. The choice of this terminology will
become clear in Section~\ref{sectionselfself}.

Formally the result, as announced in \cite[IX]{MP}, applies only
under the \emph{prior} requirement that the length of the curve is
finite, since the proof is given for self-expanded curves
parameterized by the arc-length parametrization in a compact
interval. This restriction is removed in Section~\ref{Sub21} via a
simple continuity argument (see Corollary~\ref{main}). As a result,
the smooth orbits of \eqref{diffeq} for a quasiconvex potential, as
well as the absolutely continuous orbits of \eqref{subgrad} for a
convex one have finite length. In both cases the bound depends only
on the diameter of the initial sublevel set.

The work \cite{MP} was unknown to the authors of \cite{DLS}, who
tackled the same problem in terms of the aforementioned notion of
self-contracted curve. The definition of self-contracted curve does
not require any regularity neither on the space nor on the curve. In
particular, such curves need not be continuous, and
differentiability may a priori fail at each point. The main result
of \cite{DLS} shows that bounded continuous planar self-contracted
curves have finite length (\cite[Theorem~1.3]{DLS}). This has been
used to deduce that in $\mathbb{R}^{2}$, smooth orbits of
quasiconvex systems (respectively, absolutely continuous orbits of
nonsmooth convex systems) have finite length. As we saw before, this
conclusion essentially derives from \cite[IX]{MP} for any dimension
(see comments above). Notice however that the main result of
\cite{DLS} cannot be deduced from \cite{MP}, but can only be
compared in retrospect, once rectifiability is established.

On the other hand, not completely surprisingly, \emph{Lipschitz
continuous} self-contracted curves and self-expanded curves turn out to
be intimately related and can be obtained one from the other by
means of an adequate reparameterization, reversing orientations (see
Lemma~\ref{reverse}). Recall however that both rectifiability and
Lipschitz continuity of the curve are prior requirements for the
definition of a self-expanded curve, while they are neither
requirements nor obvious consequences of the definition of a
self-contracted curve.

\subsection{Contributions of this work}

In this work we establish rectifiability of any self-contracted
curve in $\mathbb{R}^{n}$, by extending the result of \cite{DLS} to
any dimension, and to possibly discontinuous curves. This is done by
adapting the geometrical idea of \cite{MP} to the class of
self-contracted curves. This nonsmooth adaptation is natural but
quite involved. Nonsmooth variations of the mean width of the closed
convex hull of the curve are again used to control the increase of
its length, but no prior continuity on the parametrization is
required and rectifiability is now part of the conclusions. Namely,
setting $\Gamma=\gamma(I)$ (the image of the curve in
${\mathbb{R}}^{n}$) and denoting by $\ell(\gamma)$ its length, we
establish the following result (see Theorem~\ref{main1})\,:

\begin{itemize}
\item Every self-contracted curve is rectifiable and satisfies the relation
$\ell(\gamma)\leq C\,\mathrm{diam}(\Gamma)$, where $C>0$ depends only on the dimension.
\end{itemize}

In case of continuous curves, the above result allows to consider a
Lipschitz reparameterization defined by the arc-length, see details
in Section~\ref{sec32}. This leads to the following conclusion:

\begin{itemize}
\item If $\gamma$ is a \emph{continuous} self-contracted curve and
$\Gamma=\gamma(I)$ is bounded, then $\Gamma$ is also the image of some
(Lipschitz) self-expanded curve.
\end{itemize}

In particular, the sets of all possible images of continuous self-contracted
curves and of self-expanded curves coincide. Still, the set of images of all
self-contracted curves is much larger (its elements are not connected in
general).

In the last two sections, two new applications of self-contracted
curves are considered. In Section~\ref{Section_foliations} we broaden
the framework of dynamical systems to encompass nonsmooth convex
foliation systems, with merely continuous generalized orbits. Limits
of backward secants remedy the absence of differentiability, leading to
a consistent notion of generalized solution in the sense of
nonsmooth analysis (Definition~\ref{Definition_orbit-foliation}). In
Theorem~\ref{Theorem_orbit-foliation} we show that these generalized
solutions are self-contracted curves, thus of finite length; in view
of the aforementioned result, they can also be obtained as
\textquotedblleft classical\textquotedblright\ solutions through an
adequate Lipschitz reparameterization. On the other hand, $C^{1}$
smooth convex foliation orbits enjoy a stronger property, the
so-called \emph{strong self-contractedness}, see
Definition~\ref{def-strong-sc} and Corollary~\ref{Cor_strongSC}.
Concerning this latter class, we establish in Section~\ref{sec42}
the following approximation result, with respect to the Hausdorff
distance, see Proposition~\ref{Prop_polyg}.

\begin{itemize}
\item Every $C^{1}$-smooth strongly self-contracted curve is a limit of
polygonal self-contracted curves.
\end{itemize}

Finally, in Section~\ref{Section_proximal} we provide an elegant
application of the notion of self-contracted curve in a different
framework, that of discrete systems. In particular we establish the
following result (Theorem~\ref{thm_prox})\,:

\begin{itemize}
\item Let $f$ be any convex function, bounded from below. Then the exact
proximal algorithm gives rise to a self-contracted polygonal curve.
\end{itemize}

In view of our main result, we obtain a straightforward proof of the
convergence of the proximal algorithm. The bound over the length of
the polygonal curve yields a sharp estimation on the rate of
convergence, which appears to be entirely new. Notice that the
convergence result is independent of the choice of the parameters.

\subsection{Acknowledgements}

The first author acknowledges support of the grant MTM2011-29064-C01
(Spain) and thanks Jerome Bolte and Joel Benoist for useful
discussions. The third author is partially supported by grant
MTM2009-07848 (Spain). The second and fourth authors are partially
supported by the ANR project GEOMETRYA (France). Part of this work
has been realized during a research stay of the third author at the
Universit\'e Paris Diderot (Paris 7) and Laboratory Jacques Louis
Lions. The stay was supported by the program ``Research in Paris''
offered by the Ville de Paris (Mairie de Paris). This author thanks
the host institution and Ville de Paris for its hospitality.

\section{Notation and preliminaries}

Let $({\mathbb{R}}^{n},d,\mathscr{L}^{n})$ denote the
$n$-dimensional Euclidean space endowed with the Euclidean distance
$d(x,y)=\Vert x-y\Vert$, the scalar product
$\langle\cdot,\cdot\rangle$, and the Lebesgue measure
$\mathscr{L}^{n}$. We denote by $B(x,r)$ (respectively,
$\overline{B}(x,r)$) the open (respectively, closed) ball of radius
$r>0$ and center $x\in {\mathbb{R}}^{n}$. If $A$ is a nonempty
subset of ${\mathbb{R}}^{n}$, we denote by $\sharp A$ its
\emph{cardinality}, by $\mathrm{conv\,}(A)$ its \emph{convex hull}
and by $\text{diam}\,A:=\sup\,\{d(x,y):x,y\in A\}$ its
\emph{diameter}. We also denote by $\mathrm{int}(A)$, $\overline{A}$
and $\partial A$ the \emph{interior}, the \emph{closure} and
respectively, the \emph{boundary} of the set $A$.

Let now $K$ be a nonempty closed convex subset of ${\mathbb{R}}^{n}$
and $u_{0}\in K$. The normal cone $N_{K}(u_{0})$ is defined as
follows:
\begin{equation}
N_{K}(u_{0})=\{v\in\mathbb{R}^{n}:\langle v,u-u_{0}\rangle\leq0,\forall u\in
K\}. \label{NKu0}
\end{equation}
Notice that $N_{K}(u_{0})$ is always a closed convex cone. Notice
further that $u_0\in K$ is the projection onto $K$ of all elements
of the form $u_0+tv$, where $t\geq 0$ and $v\in N_K(u_0)$.

The \emph{Hausdorff distance} between two nonempty closed subsets
$K_{1},K_{2}$ of ${\mathbb{R}}^{n}$ is given by the formula
\[
d_{H}(K_{1},K_{2}):=\inf\bigg\lbrace\varepsilon>0\,:\,K_{2}\subset
\bigcup_{z\in K_{1}}B(z,\varepsilon)\text{ and }K_{1}\subset\bigcup_{z\in
K_{2}}B(z,\varepsilon)\bigg\rbrace.
\]
A sequence of closed sets $\{K_{j}\}_{j}$ in ${\mathbb{R}}^{n}$ is said to
\emph{converge with respect to the Hausdorff distance} to a closed set
$K\subset{\mathbb{R}}^{n}$ if $d_{H}(K_{j},K)\rightarrow0$ as $j\rightarrow
\infty$.

Throughout the manuscript, $I$ will denote a possibly unbounded
interval of $\mathbb{R}$. In this work, a usual choice for the
interval will be $I=[0,T_{\infty})$ where
$T_{\infty}\in\mathbb{R\cup\{+\infty\}}$. A mapping
$\gamma:I\rightarrow {\mathbb{R}}^{n}$ is referred in the sequel as
a \emph{curve}. Although the usual definition of a curve comes along
with continuity and injectivity requirements for the map $\gamma$,
we do not make these prior assumptions here. By the term
\emph{continuous} (respectively, \emph{absolutely continuous},
\emph{Lipschitz}, \emph{smooth}) curve we shall refer to the
corresponding properties of the mapping
$\gamma:I\rightarrow{\mathbb{R}}^{n}$. A curve $\gamma$ is said to
be \emph{bounded} if its image, denoted by $\Gamma=\gamma(I)$, is a
bounded set of ${\mathbb{R}}^{n}$.

The \emph{length} of a curve $\gamma:I\rightarrow{\mathbb{R}}^{n}$
is defined as
\begin{equation}
\ell(\gamma):=\sup\Big\{\sum_{i=0}^{m-1}d(\gamma(t_{i}),\gamma(t_{i+1}
))\Big\},\label{length-formula}
\end{equation}
where the supremum is taken over all finite increasing sequences
$t_{0}<t_{1}<\cdots<t_{m}$ that lie in the interval $I$. Notice that
$\ell(\gamma)$ corresponds to the total variation of the function
$\gamma:I\rightarrow {\mathbb{R}}^{n}$. Let us mention for completeness that
the length $\ell(\gamma)$ of a continuous injective curve $\gamma$
is equal to the unidimensional Hausdorff measure
$\mathcal{H}^{1}(\Gamma)$ of its image, see e.g. \cite[Th.2.6.2]{BBI}, but it is in general greater
for noncontinuous curves. In particular we emphasis that for a piecewise continuous curve, the quantity $\ell(\gamma)$ is strictly greater than the sum of the lengths of each pieces but  we still call $\ell(\gamma)$ the length of $\gamma$. A curve is called
\emph{rectifiable}, if  it has locally bounded length.

Let us finally define the \emph{width} of a (nonempty) convex subset $K$
of $\mathbb{R}^{n}$ at the direction $u\in\mathbb{S}^{n-1}$ as being the length of
its orthogonal projection $P_{u}(K)$ on the 1--dimensional space $\mathbb{R}u$
generated by $u$. The following definition will play a key role in our main result.

\begin{definition}
[Mean width]\label{meanwidth}The \emph{mean width} of a nonempty convex set
$K\subset\mathbb{R}^{n}$ is given by the formula
\[
W(K)=\frac{1}{\sigma_{n}}\int_{{\mathbb{S}}^{n-1}}\mathscr{L}^{1}
(P_{u}(K))du,
\]
where $du$ denotes the standard  volume form on
$\mathbb{S}^{n-1}$, and
\[
\sigma_{n}=\int_{{\mathbb{S}}^{n-1}}du=\frac{n\pi^{n/2}}{\Gamma(\frac{n}{2}+1)}.
\]

\end{definition}

\subsection{Self-expanded curves}

\label{Sub21} Let us now recall from \cite{MP} the definition and
the basic properties of a favorable class of Lipschitz curves, which
has been studied thereby without a specific name. In the sequel we
call these curves \emph{self-expanded}.

\begin{definition}
[Self-expanded curve]\label{Def_EC}A Lipschitz curve
$\gamma:I\rightarrow {\mathbb{R}}^{n}$ is called \emph{self-expanded
curve} if for every $t\in I$ such that $\gamma^{\prime}(t)$ exists,
we have that
$\big\langle\gamma^{\prime}(t),\gamma(t)-\gamma(u)\big\rangle\geq0$
for all $u\in I$ such that $u\leq t$.
\end{definition}

In \cite[3.IX.]{MP} the following result has been established concerning
self-expanded curves.

\begin{theorem}
[{\cite[3.IX.]{MP}}]\label{boundedec} Let $\gamma: I
\rightarrow{\mathbb{R}}^{n}$ be a self-expanded curve of finite
length. Then there exists a constant $C>0$ depending only on the
dimension $n$ such that
\begin{equation}
\ell(\gamma)\leq C\,\text{\textrm{diam}}\,K, \label{mp-1}
\end{equation}
where $K$ is any compact set containing $\Gamma=\gamma(I)$.
\end{theorem}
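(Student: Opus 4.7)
The plan is to bound the length via an average over directions of a one-dimensional quantity, following the geometric strategy attributed to Manselli--Pucci. Since $\gamma$ is Lipschitz, the derivative $\gamma^{\prime}(t)$ exists for almost every $t\in I$ and $\ell(\gamma)=\int_{I}\|\gamma^{\prime}(t)\|\,dt$. Using the identity $\|v\|=c_{n}^{-1}\int_{\mathbb{S}^{n-1}}|\langle v,u\rangle|\,du$ for $v\in\mathbb{R}^{n}$, where $c_{n}>0$ is a dimensional constant, together with Fubini, one obtains
\[
\ell(\gamma)=\frac{1}{c_{n}}\int_{\mathbb{S}^{n-1}}V(f_{u})\,du,
\]
where $f_{u}(t):=\langle\gamma(t),u\rangle$ and $V(f_{u})$ denotes its total variation on $I$. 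In view of Definition~\ref{meanwidth} and the trivial inequality $\mathscr{L}^{1}(P_{u}(K))\leq\mathrm{diam}(K)$, the theorem will follow once we establish a one-direction estimate of the form $V(f_{u})\leq C_{0}(n)\,\mathscr{L}^{1}(P_{u}(K))$ uniformly in $u$.

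The heart of the argument is this one-direction estimate. The self-expanded condition has the following key geometric content: at every differentiability point $t$, the entire past trajectory $\{\gamma(s):s\leq t\}$ lies in the closed half-space $\{x:\langle\gamma^{\prime}(t),x-\gamma(t)\rangle\leq0\}$. Fix $u\in\mathbb{S}^{n-1}$ and cover the sphere by finitely many spherical caps $C_{1},\dots,C_{N}$ of small angular radius $\delta$; for each $j$ set $E_{j}:=\{t:\gamma^{\prime}(t)/\|\gamma^{\prime}(t)\|\in C_{j}\}$. At any $t\in E_{j}$, the velocity direction makes angle at most $\delta$ with the centre $u_{j}$ of $C_{j}$, so the half-space condition above forces $f_{u_{j}}(s)\leq f_{u_{j}}(t)+\eta(\delta)\,\mathrm{diam}(K)$ for all $s\leq t$, with $\eta(\delta)\to0$ as $\delta\to0$. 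Consequently, as $t$ ranges over the maximal subintervals of $E_{j}$ in increasing order, each newly attained maximum of $f_{u_{j}}$ on $E_{j}$ exceeds (up to error $\eta(\delta)\,\mathrm{diam}(K)$) every previously attained value, and so the sum $\int_{E_{j}}\langle\gamma^{\prime}(t),u_{j}\rangle_{+}\,dt$ telescopes and is bounded by a dimensional multiple of $\mathscr{L}^{1}(P_{u_{j}}(K))$. Summing over caps and passing to a Riemann-type limit as $\delta\to 0$ identifies the main term with $C(n)\,W(K)$.

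The main obstacle is precisely the accumulation of the error $\eta(\delta)\,\mathrm{diam}(K)$: summed over the $\sim\delta^{-(n-1)}$ caps it could, a priori, dominate the main contribution. Handling it requires either a bootstrap in which a small fraction of $\ell(\gamma)$ is absorbed back to the left, or a cleaner discrete Cauchy--Crofton formulation in which one counts level-set crossings of $f_{u}$ and uses self-expansion to bound the number of distinct local maxima in each direction independently of $\delta$. Once the one-direction estimate is in place, the passage to the conclusion is immediate: combining it with the Cauchy--Crofton identity and Definition~\ref{meanwidth} yields $\ell(\gamma)\leq C^{\prime}(n)\,W(K)$, and the classical comparison $W(K)\leq C^{\prime\prime}(n)\,\mathrm{diam}(K)$ finishes the proof. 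Finally, since we have only assumed $K$ to be compact (not convex), one may replace $K$ by its convex hull $\overline{\mathrm{conv}}\,\Gamma$ without enlarging the diameter, thereby justifying the use of mean width throughout.
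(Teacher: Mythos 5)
Your proposal contains two genuine gaps, one of which is fatal to the route as stated. First, the reduction to a \emph{uniform one-direction} estimate $V(f_u)\leq C_0(n)\,\mathscr{L}^{1}(P_u(K))$ is a reduction to a false statement. A self-expanded curve can oscillate many times transversally to a fixed direction $u$ while staying in a thin slab orthogonal to $u$: in the plane, take a zigzag whose segments have slopes $\pm\epsilon$, confined to a horizontal strip of height $h$ and making $N$ oscillations; one checks it is self-expanded, yet $V(f_{e_2})\approx Nh$ while $\mathscr{L}^{1}(P_{e_2}(K))=h$, so the ratio is unbounded. What the Cauchy--Crofton identity actually requires is only a bound on the \emph{average} of $V(f_u)$ over $u\in\mathbb{S}^{n-1}$ by a multiple of the average of $\mathscr{L}^{1}(P_u(K))$, i.e.\ of $W(K)$; no direction-by-direction comparison is available. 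Second, even within your cap decomposition you explicitly leave the per-cap error $\eta(\delta)\,\mathrm{diam}(K)$ unabsorbed and acknowledge that, summed over $\sim\delta^{-(n-1)}$ caps, it may dominate the main term; naming two possible fixes without carrying either out leaves the heart of the argument missing.

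The mechanism that actually closes the argument --- in \cite{MP}, and mirrored in this paper's proof of the analogous Theorem~\ref{main1} for self-contracted curves --- is a telescoping monotonicity of the mean width of the convex hull of the tail: for points $x'\prec x$ on the curve one shows $W(\Omega(x))+\varepsilon\|x-x'\|\leq W(\Omega(x'))$ by exhibiting a set $V\subset\mathbb{S}^{n-1}$ of directions, of measure bounded below by a dimensional constant, in which the projection of $\Omega(x')$ exceeds that of $\Omega(x)$ by a definite fraction of $\|x-x'\|$; for the remaining directions one only uses monotonicity of the projection. Summing over a partition and taking a supremum gives $\ell(\gamma)\leq C_n W(K)\leq C_n'\,\mathrm{diam}\,K$. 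This replaces your per-direction variation bound by a per-pair-of-points gain valid on a fixed fraction of directions, which is exactly what survives the averaging. (Note also that the paper itself does not reprove Theorem~\ref{boundedec}: it quotes it from \cite{MP} and only supplies the limiting argument of Corollary~\ref{main}, so the comparison above is with the strategy of \cite{MP} as reproduced for Theorem~\ref{main1}.)
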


Notice that, formally, the above result requires the curve to have finite
length. Nevertheless, the following limiting argument allows to obtain a more
general conclusion for bounded self-expanded curves.

\begin{corollary}
[Bounded self-expanded curves have finite length]\label{main}Every bounded
self-expanded curve $\gamma:I\rightarrow{\mathbb{R}}^{n}$ has finite length and \eqref{mp-1} holds.
\end{corollary}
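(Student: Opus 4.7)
The plan is to reduce to the finite-length case of Theorem~\ref{boundedec} through an exhaustion argument. By the definition \eqref{length-formula}, $\ell(\gamma)$ is the supremum of sums $\sum_{i=0}^{m-1} d(\gamma(t_i),\gamma(t_{i+1}))$ over finite increasing sequences $t_0<\cdots<t_m$ in $I$, and every such sequence is contained in some compact subinterval $J=[t_0,t_m]\subset I$. It therefore suffices to establish a uniform bound $\ell(\gamma|_J)\le C\,\mathrm{diam}(\Gamma)$ for every compact $J\subset I$, and then pass to the supremum.

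First I would observe that the restriction $\gamma|_J$ of a self-expanded curve to any subinterval $J\subset I$ is again self-expanded: the inequality $\langle\gamma^{\prime}(t),\gamma(t)-\gamma(u)\rangle\ge 0$ required for $u\le t$ is trivially preserved when the comparison range is shrunk. Moreover, since $\gamma$ is Lipschitz (this is built into Definition~\ref{Def_EC}), its restriction to the compact interval $J$ has finite length. Hence Theorem~\ref{boundedec} is applicable to $\gamma|_J$ and yields
\[
\ell(\gamma|_J)\le C\,\mathrm{diam}(K_J)
\]
for any compact set $K_J\supset\gamma(J)$, with $C$ depending only on $n$.

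The key point is that $K_J$ can be chosen independently of $J$. Because $\gamma$ is bounded, $\overline{\Gamma}$ is a compact subset of $\mathbb{R}^n$ containing $\gamma(J)$ for every $J$, and $\mathrm{diam}(\overline{\Gamma})=\mathrm{diam}(\Gamma)$. Taking $K_J=\overline{\Gamma}$ produces the uniform estimate
\[
\sum_{i=0}^{m-1} d(\gamma(t_i),\gamma(t_{i+1}))\le\ell(\gamma|_J)\le C\,\mathrm{diam}(\Gamma).
\]
Passing to the supremum over finite partitions of $I$ gives $\ell(\gamma)\le C\,\mathrm{diam}(\Gamma)$, which is finite. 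The sharper form \eqref{mp-1} for an arbitrary compact $K\supset\Gamma$ follows from $\mathrm{diam}(\Gamma)\le\mathrm{diam}(K)$.

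No serious obstacle is expected here: the argument is the standard monotone passage from finite to infinite subintervals. The only two points worth verifying explicitly are the stability of the self-expanded property under restriction and the fact that the same compact set $\overline{\Gamma}$ serves every subinterval $J$; both are immediate, which is precisely what makes the extension of Theorem~\ref{boundedec} to merely bounded curves a matter of a short limiting argument.
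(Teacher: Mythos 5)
Your proof is correct and follows essentially the same limiting argument as the paper: truncate to a piece of finite length, apply Theorem~\ref{boundedec} with a compact set chosen uniformly, and pass to the supremum. The only cosmetic difference is that you truncate in the original parameter (using the Lipschitz bound to get finite length on compact subintervals) whereas the paper first reparameterizes by arc-length and truncates there; both routes are valid.
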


\begin{proof} Since self-expanded curves are rectifiable by definition, and because  reparameterizing $\gamma$ does not change the statement, we
may assume that $\gamma$ is parameterized by its arc-length on
$I=[0,\ell (\gamma))$. Notice though that in principle
$\ell(\gamma)$ might be infinite. Our aim is precisely to show that
this is not the case. Indeed, let $K$ be a compact set containing
$\gamma(I)$ and let $\{L_{n}\}_{n}$ be an increasing sequence of
real numbers converging to
$\ell(\gamma)\in\mathbb{R\cup\{+\infty\}}$. Applying
Theorem~\ref{boundedec} for the curve
$\gamma_{n}:[0,L_{n}]\rightarrow{\mathbb{R}}^{n}$ (restriction of
$\gamma$ to $[0,L_{n}]$), we obtain
\[
L_{n}=\ell(\gamma_{n})\leq C\,\mathrm{diam}\,K,\quad\text{for all }n\geq1.
\]
This shows that $\ell(\gamma)=\lim_{n\rightarrow+\infty}L_{n}$ is
bounded and satisfies the same estimate.
\end{proof}

\subsection{Self-contracted versus self-expanded curves}
\label{sectionselfself}

The aim of this section is to prove that Lipschitz continuous
self-contracted curves and self-expanded curves give rise to the same
images. Moreover, each of these curves can be obtained from the
other upon reparameterization (inverting orientation). As a
byproduct, bounded Lipschitz self-contracted curves in
${\mathbb{R}}^{n}$ have finite length.

Let us recall the definition of a self-contracted curve (see
\cite[Definition 1.2]{DLS}).

\begin{definition}
[Self-contracted curve]\label{defsc}\thinspace A curve
$\gamma:I\rightarrow {\mathbb{R}}^{n}$ is called \em self-contracted\em, if
for every $t_{1}\leq t_{2}\leq t_{3}$ in $I$ we have
\begin{equation}
d(\gamma(t_{1}),\gamma(t_{3}))\geq d(\gamma(t_{2}),\gamma(t_{3})).
\label{SC-ineq}
\end{equation}
In other words, the function $t\mapsto d(\gamma(t),\gamma(t_{3}))$ is
nonincreasing on $I\cap(-\infty,t_{3}].$
\end{definition}

\begin{remark}
\label{scmetric}(i) As we already said before, the definition of
self-contracted curve can be given in any metric space and does not
require any regularity of the curve, such as continuity or
differentiability. Notice moreover that if
$\gamma(t_{1})=\gamma(t_{3})$ in \eqref{SC-ineq} above, then
$\gamma(t)=\gamma(t_{1})$ for $t_{1}\leq t\leq t_{2}$; thus if
$\gamma$ is not locally stationary, then it is
injective.\smallskip\newline (ii) It has been proved in
\cite[Proposition~2.2]{DLS} that if $\gamma$ is self-contracted and
bounded, and $I=[0,T_{\infty})$ with $T_{\infty
}\in\mathbb{R}^+\cup\{+\infty\}$, then $\gamma$ converges to some
point $\gamma_{\infty}\in{\mathbb{R}}^{n}$ as $t\rightarrow
T_{\infty}$. (Notice that this conclusion follows also from our main
result Theorem~\ref{main1}.) Consequently the curve $\gamma$ can be
extended to $\bar {I}=[0,T_{\infty}]$. In particular, if $\gamma$ is
continuous, then denoting by $\Gamma=\gamma(I)$ the image of
$\gamma$, it follows that the set
\[
\bar{\Gamma}=\gamma(I)\cup\{\gamma_{\infty}\}=\gamma(\bar I)
\]
is a compact subset of ${\mathbb{R}}^{n}$.
\end{remark}

\begin{lemma}
[Property of a differentiability point]\label{halfspace} Let $\gamma
:I\rightarrow{\mathbb{R}}^{n}$ be a self-contracted curve and let
$t$ be a point of differentiability of $\gamma$. Then
\[
\langle\gamma^{\prime}(t),\gamma(u)-\gamma(t)\rangle\geq0\ \hbox{
for all $u\in I$ such that $u > t$.}
\]

\end{lemma}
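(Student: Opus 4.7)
The plan is to differentiate the squared distance to $\gamma(u)$ at the point $t$, and read off the sign from the self-contracted property.

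Fix $u\in I$ with $u>t$, and consider the auxiliary function
\[
\phi(s):=\|\gamma(s)-\gamma(u)\|^{2}=d(\gamma(s),\gamma(u))^{2}.
\]
By Definition~\ref{defsc} applied to any triple $s\le s'\le u$ in $I$, the function $s\mapsto d(\gamma(s),\gamma(u))$ is nonincreasing on $I\cap(-\infty,u]$, and hence so is $\phi$. Since $t<u$, there exists a (two-sided) neighborhood $V$ of $t$ in $I$ with $V\subset I\cap(-\infty,u]$, on which $\phi$ is nonincreasing.

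Next I would use the assumed differentiability of $\gamma$ at $t$: by the chain rule, $\phi$ is differentiable at $t$ with
\[
\phi'(t)=2\,\langle\gamma'(t),\gamma(t)-\gamma(u)\rangle.
\]
Since $\phi$ is nonincreasing on a neighborhood of $t$ in $I$, its derivative at $t$ satisfies $\phi'(t)\le 0$, which gives
\[
\langle\gamma'(t),\gamma(t)-\gamma(u)\rangle\le 0,
\]
equivalently $\langle\gamma'(t),\gamma(u)-\gamma(t)\rangle\ge 0$. Since $u>t$ was arbitrary, this proves the claim.

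There is essentially no obstacle here: the statement is a one-line consequence of the self-contracted inequality combined with differentiability at $t$, together with the elementary fact that a function which is monotone nonincreasing on a neighborhood of a differentiability point has nonpositive derivative there.
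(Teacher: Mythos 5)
Your proof is correct and follows essentially the same route as the paper: the paper expands $\|\gamma(t+s)-\gamma(u)\|^2-\|\gamma(t)-\gamma(u)\|^2$ directly using $\gamma(t+s)=\gamma(t)+s\gamma'(t)+o(s)$ and lets $s\to 0^+$, which is just your observation that the nonincreasing function $\phi(s)=\|\gamma(s)-\gamma(u)\|^2$ has nonpositive derivative at $t$, written out by hand. The only cosmetic point is that a two-sided neighborhood is not needed (nor available if $t$ is the left endpoint of $I$); the one-sided difference quotients for $s>0$ already suffice.
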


\begin{proof} Assume that $\gamma$ is differentiable at $t\in
I$, and write $\gamma(t+s)=\gamma(t)+s\gamma^{\prime}(t)+o(s)$, with
$\lim_{s\rightarrow 0}s^{-1}o(s)=0$. Let $u\in I$ be such that
$u>t$, take $s$ such that $0< s <u-t$ and apply \eqref{SC-ineq} for
$t_{1}=t$, $t_{2}=t+s$ and $t_{3}=u$. We deduce that
\[
\Vert\gamma(t)-\gamma(u)\Vert\geq\Vert\gamma(t+s)-\gamma(u)\Vert.
\]
Since
$\gamma(t+s)-\gamma(u)=\gamma(t)-\gamma(u)+s\gamma^{\prime}(t)+o(s)$,
substituting this in the above inequality and squaring yields
\[
\begin{aligned}
0 &\geq \|\gamma(t+s)-\gamma(u)\|^2 - \|\gamma(t)-\gamma(u)\|^2 \cr
& = 2 \langle s\gamma'(t) + o(s),\gamma(t)-\gamma(u)\rangle +
\|s\gamma'(s)+o(s)\|^2 \cr & = 2 s\langle \gamma'(t) ,\gamma(t)-\gamma(u)\rangle
+ o(s).
\end{aligned}
\]
Dividing by $s$, and taking the limit as $s$ tends to $0^{+}$ we get the
desired result.
\end{proof}

Given a curve $\gamma:I\mapsto{\mathbb{R}}^{n}$, we denote by
$I^{-}=-I=\{-t\,;\,t\in I\}$ the opposite interval, and define the
reverse parametrization $\gamma^{-}:I^{-}\mapsto{\mathbb{R}}^{n}$ of
$\gamma$ by $\gamma^{-}(t)=\gamma(-t)$ for $t\in I^{-}$.

\begin{lemma}
[Lipschitz self-contracted versus self-expanded curves]\label{reverse} Let
$\gamma:I\rightarrow{\mathbb{R}}^{n}$ be a Lipschitz curve. Then $\gamma$ is
self-contracted if and only if $\gamma^{-}$ is a self-expanded curve.
\end{lemma}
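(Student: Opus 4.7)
The plan is to unravel both directions by passing between the pointwise self-expanded inequality and the monotone squared-distance inequality that defines self-contractedness, using that Lipschitz continuity forces the relevant scalar functions to be absolutely continuous.

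For the forward implication, assume $\gamma$ is a Lipschitz self-contracted curve. Then $\gamma^-(t)=\gamma(-t)$ is also Lipschitz, hence differentiable almost everywhere. At any differentiability point $s$ of $\gamma^-$, the chain rule gives $(\gamma^-)'(s)=-\gamma'(-s)$, and $-s$ is a differentiability point of $\gamma$. The self-expanded condition to be verified reads
\[
\langle (\gamma^-)'(s),\,\gamma^-(s)-\gamma^-(v)\rangle\geq 0 \quad\text{for all } v\leq s,\ v\in I^-.
\]
Substituting and setting $u=-v$, this is equivalent to $\langle \gamma'(-s),\,\gamma(u)-\gamma(-s)\rangle\geq 0$ for all $u\geq -s$, $u\in I$. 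For $u>-s$ this is exactly the conclusion of Lemma~\ref{halfspace}, and for $u=-s$ the inequality is trivial.

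For the converse implication, assume $\gamma^-$ is a self-expanded curve; by the definition of self-expanded, $\gamma^-$ is Lipschitz, hence so is $\gamma$. Fix $t_1\leq t_2\leq t_3$ in $I$ and set $s_i=-t_i$, so that $s_3\leq s_2\leq s_1$ all lie in $I^-$. Consider
\[
\varphi(s)\,:=\,\tfrac{1}{2}\,\|\gamma^-(s)-\gamma^-(s_3)\|^{2},\qquad s\in I^-,\ s\geq s_3.
\]
Since $\gamma^-$ is Lipschitz and the squared-norm is $C^1$, $\varphi$ is absolutely continuous, so $\varphi(s_1)-\varphi(s_2)=\int_{s_2}^{s_1}\varphi'(s)\,ds$. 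At almost every $s\in[s_3,s_1]$, $\gamma^-$ is differentiable and
\[
\varphi'(s)\,=\,\langle (\gamma^-)'(s),\,\gamma^-(s)-\gamma^-(s_3)\rangle\,\geq\, 0,
\]
where the inequality uses the self-expanded property applied with $u=s_3\leq s$. Hence $\varphi$ is nondecreasing on $[s_3,s_1]\cap I^-$, which gives $\|\gamma^-(s_1)-\gamma^-(s_3)\|\geq \|\gamma^-(s_2)-\gamma^-(s_3)\|$, and unwinding the reparametrization yields exactly $d(\gamma(t_1),\gamma(t_3))\geq d(\gamma(t_2),\gamma(t_3))$, the self-contracted inequality (\ref{SC-ineq}).

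The only subtle point is the converse direction: the self-expanded property is an almost-everywhere pointwise statement, while the self-contracted property is required at every triple $t_1\leq t_2\leq t_3$. The passage from one to the other is bridged by the absolute continuity of $\varphi$, which is where the Lipschitz hypothesis is essential; without it, the argument would collapse.
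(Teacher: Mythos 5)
Your proof is correct and follows essentially the same route as the paper: the forward direction is an immediate application of Lemma~\ref{halfspace} after reversing orientation, and the converse integrates the sign of the almost-everywhere derivative of the squared distance to a fixed point, using Rademacher's theorem and the absolute continuity guaranteed by the Lipschitz hypothesis. The only cosmetic difference is that you carry out the monotonicity argument on $I^-$ rather than transporting the self-expanded inequality back to $I$ first, which changes nothing of substance.
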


\begin{proof} If $\gamma:I\rightarrow{\mathbb{R}}^{n}$ is
Lipschitz and self-contracted, then Lemma~\ref{halfspace} applies,
yielding directly that $\gamma^{-}$ is a self-expanded curve.
Conversely, suppose that $\gamma^{-}$ is a self-expanded curve. This
means that $\gamma$ is Lipschitz and (after reversing the
orientation) that
\begin{equation}
\langle\gamma^{\prime}(t),\gamma(u)-\gamma(t)\rangle\geq0\ \hbox{
for $u\in I$ such that $u > t$} \label{ant1}
\end{equation}
whenever $\gamma^{\prime}(t)$ exists and is different from $0$.

\noindent Fix now any $t_{3}\in I$ and define the  function
\[
f(t)=\frac{1}{2}\Vert\gamma(t)-\gamma(t_{3})\Vert^{2},\quad\text{for all } t\in
I\text{, }t\leq t_{3}.
\]
By Rademacher's theorem the Lipschitz continuous functions $\gamma$ is differentiable $\mathscr{L}^{1}$-almost everywhere, and so $f$ is too, and  $f^{\prime
}(t)=\langle\gamma^{\prime}(t),\gamma(t)-\gamma(t_{3})\rangle$ for almost all
$t\in I\cap(-\infty,t_{3})$.

If $t<t_{3}$ and $\gamma^{\prime}(t)\neq0$ then \eqref{ant1} above (for
$u=t_{3}$) yields that $f^{\prime}(t)\leq0$. Otherwise, $f^{\prime}(t)=0$. It
follows that the Lipschitz function $f$ satisfies $f^{\prime}(t)\leq0$
$\mathscr{L}^{1}$-almost everywhere, thus it is nondecreasing. This
establishes \eqref{SC-ineq}. Since $t_{3}$ has been chosen arbitrarily, the
proof is complete.
\end{proof}

\section{Rectifiability of self-contracted curves}

\label{section_RECT} In \cite[Theorem 1.3]{DLS} it has been
established that bounded self-contracted continuous planar curves
$\gamma:[0,+\infty )\rightarrow\mathbb{R}^{2}$ have finite length.
In this section we improve this result by dropping the continuity
assumption, and we extend it to any dimension. Precisely, we
establish that the length of any self-contracted curve
$\gamma:I\rightarrow{\mathbb{R}}^{n}$ lying inside a compact set is
bounded by a quantity depending only on the dimension of the space
and the diameter of the compact set, see the forthcoming
Theorem~\ref{main1}.

\subsection{Proof of the main result}

The proof makes use of the following technical facts.

\begin{lemma}
[Saturating the sphere]\label{counting} Let $\Sigma\subseteq\mathbb{S}^{n-1}$
be such that $\langle x,y\rangle\leq1/2$ for all $x,y\in\Sigma,x\not =y$. Then
$\Sigma$ is finite and $\sharp\Sigma\leq3^{n}$.
\end{lemma}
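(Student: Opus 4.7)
The plan is a standard volume-packing argument. First I would translate the inner-product hypothesis into a distance hypothesis on $\Sigma$: for distinct $x,y\in\Sigma\subseteq\mathbb{S}^{n-1}$,
\[
\|x-y\|^{2}=\|x\|^{2}+\|y\|^{2}-2\langle x,y\rangle \;\geq\; 2-2\cdot\tfrac{1}{2}=1,
\]
so the points of $\Sigma$ are pairwise at Euclidean distance at least $1$.

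Next I would observe that the open balls $\{B(x,1/2)\}_{x\in\Sigma}$ are pairwise disjoint: if some point $z$ lay in two of them, the triangle inequality would give $\|x-y\|<1$, contradicting the previous step. Moreover each such ball is contained in the enlarged ball $B(0,3/2)$, since for $z\in B(x,1/2)$ with $x\in\mathbb{S}^{n-1}$,
\[
\|z\|\leq \|x\|+\|z-x\|<1+\tfrac{1}{2}=\tfrac{3}{2}.
\]

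Comparing $n$-dimensional Lebesgue volumes then yields
\[
\sharp\Sigma\cdot\mathscr{L}^{n}\bigl(B(0,\tfrac{1}{2})\bigr)\;\leq\;\mathscr{L}^{n}\bigl(B(0,\tfrac{3}{2})\bigr),
\]
and since the volume of a Euclidean ball scales as the $n$-th power of the radius, the volume of the unit ball cancels and one is left with $\sharp\Sigma\leq 3^{n}$. In particular $\Sigma$ is finite. There is no real obstacle here; the only thing to double-check is the strict/nonstrict inequality, but since $\mathscr{L}^{n}(B(0,1/2))>0$ the bound $\sharp\Sigma\leq 3^{n}$ follows regardless.
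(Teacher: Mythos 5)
Your argument is correct and is essentially identical to the paper's proof: both derive $\|x-y\|^2=2-2\langle x,y\rangle\geq 1$, pack the disjoint balls $B(x,1/2)$ into $B(0,3/2)$, and compare Lebesgue volumes to get $\sharp\Sigma\leq 3^n$. No issues.
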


\begin{proof} For any $x,y\in\Sigma,x\not =y$, we have $\Vert
x-y\Vert^{2}=2-2\langle x,y\rangle\geq1$. Therefore, the open balls
$\{B(x,1/2)\}_{x\in\Sigma}$ are disjoint and they are all contained in the
ball $B(0,\frac{3}{2})$. Set $\omega_{n}=\mathscr{L}^{n}(B(0,1))$ (the measure of the unit
ball); then
\[
(\sharp\Sigma)\;\omega_{n}\left(  \frac{1}{2}\right)
^{n}=\mathscr{L}^{n}\Big( \bigcup _{x\in\Sigma}B(x,1/2)\Big)
\leq\mathscr{L}^{n}\big(B(0,3/2)\big)=\omega_{n}\left(
\frac{3}{2}\right) ^{n}
\]
and so $\sharp\Sigma\leq3^{n}$.
\end{proof}

\begin{lemma}
[Hemisphere lemma]\label{caratheodory} Let $\Sigma\subset\mathbb{S}^{n-1}$
be a set satisfying
\begin{equation}
\langle x,y\rangle\geq-\left(  \frac{1}{3}\right)  ^{n+1}\ \hbox{
for all }x,y\in\Sigma. \label{hypc}
\end{equation}
Then there exists $\zeta\in\mathbb{S}^{n-1}$ such that
\[
\langle\zeta,x\rangle\geq\left(  \frac{1}{3}\right)  ^{2n+1}\ \hbox{ for all }x\in
\Sigma.
\]

\end{lemma}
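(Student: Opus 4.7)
The plan is to take $\zeta$ to be (the unit vector in the direction of) the closest point to the origin in $K := \operatorname{conv}(\Sigma)$, and then combine the projection inequality with a Carathéodory-type combinatorial bound to control $\|z\|$ from below. This is exactly what the label \emph{caratheodory} suggests.

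First I would fix $z$ to be the metric projection of $0$ onto the compact convex set $K$. By the characterization of the projection onto a convex set (cf.~\eqref{NKu0}), one has $\langle -z, x-z\rangle \leq 0$ for every $x\in K$, i.e.\ $\langle z, x\rangle \geq \|z\|^2$ for every $x\in \Sigma$. Hence, provided $z\neq 0$, the vector $\zeta := z/\|z\|$ satisfies $\langle \zeta, x\rangle \geq \|z\|$ for all $x\in \Sigma$, and the problem reduces to proving the quantitative lower bound
\[
\|z\| \;\geq\; \left(\tfrac{1}{3}\right)^{2n+1}.
\]

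Next I would invoke Carathéodory's theorem to write $z = \sum_{i=1}^{k}\lambda_i x_i$ with $x_i\in\Sigma$, $\lambda_i\geq 0$, $\sum\lambda_i = 1$ and $k\leq n+1$. Expanding
\[
\|z\|^2 = \sum_i \lambda_i^2 + \sum_{i\neq j}\lambda_i\lambda_j\,\langle x_i,x_j\rangle,
\]
the hypothesis \eqref{hypc} together with $\sum_{i\neq j}\lambda_i\lambda_j = 1 - \sum_i\lambda_i^2$ yields
\[
\|z\|^2 \;\geq\; \bigl(1+(\tfrac13)^{n+1}\bigr)\sum_i \lambda_i^2 \;-\; (\tfrac13)^{n+1}.
\]
The Cauchy--Schwarz bound $\sum_i\lambda_i^2 \geq 1/k \geq 1/(n+1)$, combined with the elementary inequality $n+1 \leq 3^n$ (valid for all $n\geq 1$), gives $\sum_i\lambda_i^2 \geq 3\cdot(\tfrac13)^{n+1}$, and hence
\[
\|z\|^2 \;\geq\; 2\,(\tfrac13)^{n+1}.
\]
Since $\tfrac{n+1}{2}\leq 2n+1$, this implies $\|z\| \geq (\tfrac13)^{(n+1)/2}\geq (\tfrac13)^{2n+1}$, which is the required estimate. (The same computation shows in particular $z\neq 0$, so the normalization $\zeta = z/\|z\|$ is legitimate.)

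The main obstacle I anticipate is just book-keeping with the constants: one must verify that the three ingredients (Carathéodory dimension $n+1$, the lower bound $(\tfrac13)^{n+1}$ on inner products, and the target $(\tfrac13)^{2n+1}$) fit together, which boils down to the elementary inequalities $n+1\leq 3^n$ and $(n+1)/2\leq 2n+1$. Aside from this arithmetic, the argument is essentially a one-line application of the projection characterization plus Carathéodory, so the conceptual part of the proof is straightforward.
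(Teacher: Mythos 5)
Your argument is correct, and it is genuinely different from the paper's. The paper extracts a maximal family $\{x_i\}\subset\Sigma$ with pairwise inner products at most $\tfrac12$, bounds its cardinality by $3^n$ via a ball-packing argument (Lemma~\ref{counting}), and takes $\zeta$ to be the normalization of $v=\sum_i x_i$, estimating $\langle v,y\rangle$ from below by maximality and $\Vert v\Vert$ from above by near-orthogonality. You instead take $\zeta$ in the direction of the metric projection $z$ of the origin onto the convex hull, so that $\langle\zeta,x\rangle\geq\Vert z\Vert$ automatically, and reduce everything to a lower bound on $\Vert z\Vert$ obtained from Carath\'eodory plus the quadratic expansion $\Vert z\Vert^2=(1+\varepsilon)\sum_i\lambda_i^2-\varepsilon$ with $\varepsilon=3^{-(n+1)}$ and $\sum_i\lambda_i^2\geq\tfrac1{n+1}$. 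Your route bypasses Lemma~\ref{counting} entirely and in fact yields the stronger bound $\Vert z\Vert\geq\sqrt{2}\,\left(\tfrac13\right)^{(n+1)/2}$, exponentially better than $\left(\tfrac13\right)^{2n+1}$; the paper's method has the (mild) virtue of being self-contained and constructive via a greedy selection. One small point to tidy up: $\Sigma$ is not assumed closed, so you should project onto the \emph{closed} convex hull, which equals $\mathrm{conv\,}(\overline{\Sigma})$ and is compact since $\Sigma$ is bounded; Carath\'eodory then gives $x_i\in\overline{\Sigma}$, and hypothesis \eqref{hypc} passes to $\overline{\Sigma}$ by continuity, so nothing is lost. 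All the arithmetic ($n+1\leq 3^n$ for $n\geq1$, $(n+1)/2\leq 2n+1$) checks out, and the final computation indeed forces $z\neq0$, legitimizing the normalization.
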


\begin{proof} Let $\{x_{i}\}_{i\in I}$ be a family of points in
$\Sigma$, maximal with respect to the property that
\begin{equation}
\langle x_{i},x_{j}\rangle\leq\frac{1}{2}\ \hbox{ for all }i\not
=j\text{ in }I\,. \label{presqueorth}
\end{equation}
(Notice that such a family can easily be constructed by induction.)
Applying Lemma~\ref{counting} we deduce that $m:=\sharp I\leq3^{n}$.
Set
\[
v:=\sum_{i\in I}x_{i}
\]
and let $y\in\Sigma$ be an arbitrary point. If $y$ does not belong to the
family $\{x_{i}\}_{i\in I}$ then by maximality of the latter, there exists
some $i_{0}$ such that $\langle x_{i_{0}},y\rangle>\frac{1}{2}$. If on the
contrary, $y=x_{i}$ for some $i$, then we take $i_{0}=i$. In view of
\eqref{hypc} we obtain
\begin{equation}
\langle v,y\rangle=\langle x_{i_{0}},y\rangle+~\sum_{i\not
=i_{0}}\langle x_{i},y\rangle\geq\frac{1}{2}-(m-1)\left(
\frac{1}{3}\right)  ^{n+1}, \label{restim}
\end{equation}
whence, recalling that $m\leq3^{n}$, we get $\langle
y,v\rangle\geq3^{-(n+1)}$. This shows in particular that $v\neq0$.
Let us now compute $\Vert v\Vert$. We write
\begin{equation}
\Vert v\Vert^{2}=\Big\|\sum_{i\in I}x_{i}\Big\|^{2}=\sum_{i\in
I}\Vert x_{i}\Vert^{2}+E=m+E \label{norm}
\end{equation}
where
\[
E=\sum_{i\in I}\sum_{j\not =i}\langle x_{i},x_{j}\rangle.
\]
Using \eqref{presqueorth} we deduce that $|E|\leq m(m-1)/2$. Therefore
\eqref{norm} yields
\[
\Vert v\Vert^{2}\leq
m+\frac{m(m-1)}{2}=\frac{m}{2}(m+1)\leq\frac{3^{n}}
{2}(3^{n}+1)\leq3^{2n}.
\]
So dividing by $\Vert v\Vert$ in \eqref{restim} and setting
\[
\zeta:=\frac{v}{\Vert v\Vert}\in\mathbb{S}^{n-1}
\]
we obtain
\[
\langle\zeta,y\rangle=\frac{1}{\Vert v\Vert}\langle
v,y\rangle\geq\left( \frac{1}{3}\right)  ^{n}\left(
\frac{1}{3}\right)  ^{n+1}=\left(  \frac{1} {3}\right)  ^{2n+1}.
\]

Since $y$ is arbitrary in $\Sigma,$ the proof is complete.
\end{proof}

\bigskip

We are now ready to prove the main result of this section.

\begin{theorem}
\label{main1} Let $\gamma:I\rightarrow{\mathbb{R}}^{n}$ be a self-contracted
curve. Then there exists a constant $C_{n}$ (depending only on the dimension
$n$) such that
\begin{equation}
\ell(\gamma)\leq C_{n}W(K), \label{length}
\end{equation}
where $K$ is the closed convex hull of $\gamma(I).$ In particular, bounded
self-contracted curves have finite length.
\end{theorem}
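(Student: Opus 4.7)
The plan is to adapt the smooth self-expanded argument of \cite{MP} to the polygonal, possibly discontinuous setting of self-contracted curves, using Lemmas~\ref{counting} and \ref{caratheodory} as the principal combinatorial tools. First, by the length formula \eqref{length-formula}, it suffices to prove a uniform bound $\sum_{i=0}^{m-1}\ell_i\leq C_nW(K)$ over all finite subdivisions $t_0<t_1<\cdots<t_m$ of $I$, with $v_i:=\gamma(t_{i+1})-\gamma(t_i)$, $\ell_i:=\|v_i\|$, and (for nondegenerate chords) $e_i:=v_i/\ell_i\in\mathbb{S}^{n-1}$. The only structural input beyond this I would use is the \emph{discrete half-space inequality}
\[
\langle v_i,\,\gamma(t_{j+1})-\gamma(t_{i+1})\rangle\geq -\tfrac{1}{2}\ell_i^{\,2}\qquad(j\geq i),
\]
obtained by squaring the self-contracted inequality on the triple $t_i<t_{i+1}<t_{j+1}$ exactly as in the proof of Lemma~\ref{halfspace}. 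This is the nonsmooth counterpart of the half-space relation used in \cite{MP}, with a controlled quadratic error that must later be absorbed.

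Next, I would cluster the chord directions on $\mathbb{S}^{n-1}$. Fix a dimensional threshold $\theta_n\in(1/\sqrt{2},1)$ chosen large enough that any two unit vectors $e,e'$ with $\langle e,e'\rangle>\theta_n$ automatically satisfy the hypothesis \eqref{hypc} of Lemma~\ref{caratheodory}. Extract a maximal sub-family of representatives $\{e_{i_p}\}_{p=1}^N$ with pairwise inner products $\leq\theta_n$; arguing exactly as in Lemma~\ref{counting}, $N$ is bounded by a dimensional constant. Assign each index to the closest representative, producing a partition $\bigsqcup_{p=1}^N\mathcal{A}_p$ such that within any $\mathcal{A}_p$ Lemma~\ref{caratheodory} supplies a unit vector $\zeta_p\in\mathbb{S}^{n-1}$ with $\langle\zeta_p,e_i\rangle\geq(1/3)^{2n+1}=:c_n$ for every $i\in\mathcal{A}_p$. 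In particular $\ell_i\leq c_n^{-1}\langle\zeta_p,v_i\rangle$, reducing the length contribution of each class to its total positive $\zeta_p$-advance.

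It then remains to show the per-class width estimate
\[
\sum_{i\in\mathcal{A}_p}\langle\zeta_p,v_i\rangle\;\leq\;C'_n\,w_{\zeta_p}(K),\qquad w_\zeta(K):=\mathscr{L}^1(P_\zeta K).
\]
Summing over the boundedly many classes and using $\mathrm{diam}(K)\leq C''_n\,W(K)$ (a consequence of $W(K)\geq \mathrm{diam}(K)\cdot\sigma_n^{-1}\int_{\mathbb{S}^{n-1}}|u_1|\,du$, valid for every convex body in $\mathbb{R}^n$) then delivers the target $\ell(\gamma)\leq C_nW(K)$. An alternative route, closer to the spirit of \cite{MP}, is to run the clustering argument under an integration over $\mathbb{S}^{n-1}$, obtaining $W(K)$ directly rather than passing through the diameter.

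The main obstacle is the per-class estimate just above. The indices of $\mathcal{A}_p$ are interleaved with those of the other classes, so the projected chord-intervals $[\langle\zeta_p,\gamma(t_i)\rangle,\langle\zeta_p,\gamma(t_{i+1})\rangle]\subset\mathbb{R}$ appear on the real line in a non-consecutive order, and the quadratic error $\tfrac{1}{2}\ell_i^{\,2}$ in the discrete half-space inequality must be absorbed while tracking how many times the projected curve can revisit a given value. Turning this heuristic into a bounded-multiplicity bound, uniformly in the partition, is where the genuinely nonsmooth adaptation of \cite{MP} takes place; I expect this to be the technically most involved step of the proof.
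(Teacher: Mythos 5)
Your setup is sound as far as it goes: the reduction to finite subdivisions, the volume-packing bound on the number of direction classes (as in Lemma~\ref{counting}), and the elementary consequences of self-contractedness obtained by squaring \eqref{SC-ineq} are all correct. (Minor remark: basing your ``discrete half-space inequality'' at $\gamma(t_i)$ rather than $\gamma(t_{i+1})$ gives the clean bound $\langle v_i,\gamma(t_{j+1})-\gamma(t_i)\rangle\geq\tfrac12\ell_i^2\geq 0$ --- this is exactly \eqref{positive} in the paper --- so the quadratic error you propose to absorb is avoidable; also, within a class whose elements all lie within angle $\arccos\theta_n$ of a common representative, that representative already serves as $\zeta_p$, so Lemma~\ref{caratheodory} is not needed there.) But the proof does not close. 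Everything hinges on the per-class estimate $\sum_{i\in\mathcal{A}_p}\langle\zeta_p,v_i\rangle\leq C_n'\,w_{\zeta_p}(K)$, and you do not prove it; you correctly identify it as the place where the multiplicity of revisits of the projected curve must be controlled, and that control \emph{is} the theorem. A priori the class-$p$ chords, interleaved with chords of other classes that move backwards in the direction $\zeta_p$, could sweep the same interval of $\mathbb{R}\zeta_p$ many times, and extracting a bounded-multiplicity statement from self-contractedness for a direction $\zeta_p$ fixed once and for all (independently of which chord is being examined) is not a routine step. As written, the argument establishes only $\ell(\gamma)\leq C_n\sum_p(\text{positive }\zeta_p\text{-variation of the projected curve along }\mathcal{A}_p)$, which is not yet a bound.

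The paper avoids the multiplicity problem altogether by a different mechanism, which is worth internalizing. Instead of fixing one direction per class of chords, it attaches to each point $x$ of the curve the closed convex hull $\Omega(x)$ of the tail $\Gamma(x)$, and proves the quantitative monotonicity \eqref{estim2}: $W(\Omega(x))+\varepsilon\|x-x'\|\leq W(\Omega(x'))$ for $x'\preceq x$. Since the sets $\Omega(\cdot)$ are nested, the mean width is a genuine Lyapunov function along the curve and the length bound follows by telescoping (Claim~1) --- no revisiting can occur because one is summing decrements of a monotone set function, not projected displacements. The single-chord decrement is obtained by exhibiting, for each pair $x'\prec x$, a cap of directions $V$ near $v_0=(x'-x)/\|x'-x\|$ in which the entire tail $\Gamma(x)$ lies in a half-space through $x_0=\tfrac13x+\tfrac23x'$; here Lemma~\ref{caratheodory} is applied not to chord directions of a partition but to the secant directions $\xi_0(y)$, $y\in\Gamma(x)$, projected onto $v_0^\perp$, whose pairwise near-nonnegativity comes from \eqref{positive}. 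If you want to salvage your route, you would need to prove the directional bounded-multiplicity statement directly (this is essentially what \cite{MP} does in the Lipschitz case via arc-length and a covering of times by aligned excursions), but for discontinuous curves the paper's tail-convex-hull argument is the workable substitute; as it stands, your proposal leaves the theorem's essential difficulty open.
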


\begin{proof} The result holds vacuously for unbounded curves (both
left-hand and right-hand side of \eqref{length} are equal to $+\infty$).
Therefore, we focus our attention on bounded self-contracted curves and assume that
$K$ is compact. We may also clearly assume  that $n\geq2$ (the result is trivial in
the one-dimensional case).

In the sequel, we denote by $\Gamma=\gamma(I)$ the image of such
curve. The set $\Gamma$ inherits from $I$ a total order as follows:
for $x,y\in\Gamma$ we say that \textquotedblleft$x$ is before $y$
\textquotedblright\ and denote $x\preceq y$, if there exist
$t_{1},t_{2}\in I,$ $t_{1}\leq t_{2}$ and $\gamma(t_{1})=x,$
$\gamma(t_{2})=y$. If $x\preceq y$ and $x\neq y,$ then the intervals
$\gamma^{-1}(x)$ and $\gamma^{-1}(y)$ do not meet, and for any
$t_{1}\in\gamma^{-1}(x)$, $t_{2}\in\gamma^{-1}(y)$ we have
$t_{1}<t_{2}$. In this case we say that \textquotedblleft$x$ is
strictly before $y$\textquotedblright\ and we denote $x\prec y$. For
$x\in\Gamma$ we set
\[
\Gamma(x):=\{y\in\Gamma:x\preceq y\}
\]
(the piece of curve after $x$) and denote by $\Omega(x)$ the closed
convex hull of $\Gamma(x)$.

\medskip

\textit{Claim~1}. To establish \eqref{length} it suffices to find a
positive constant $\varepsilon=\varepsilon(n)$, depending only on
the dimension $n$, such that for any two points
$x,x^{\prime}\in\Gamma$ with $x^{\prime}\preceq x$ it holds
\begin{equation}
W(\Omega(x))+\varepsilon\Vert x-x^{\prime}\Vert\leq
W(\Omega(x^{\prime})). \label{estim2}
\end{equation}

\textit{Proof of Claim~1}. Let us see how we can deduce
Theorem~\ref{main1} from the above. To this end, let
$t_{0}<t_{1}\ldots<t_{m}$ be any increasing sequence in $I$, and set
$x_{i}=\gamma(t_{i})$. If \eqref{estim2} holds, then
\[
\begin{aligned} \sum_{i=0}^{m-1}\|\gamma(t_{i+1})- \gamma(t_i)\|
&= \sum_{i=0}^{m-1}\|x_{i+1}- x_{i}\| \leq \frac{1}{\varepsilon}
\sum_{i=0}^{m-1}\big(W(C(x_{i})-W(C(x_{i+1}))\big) \cr& =
\frac{1}{\varepsilon} (W(C(x_{0}))-W(C(x_{m}))) \leq
\frac{1}{\varepsilon} W(C(x_{0})) \leq \frac{1}{\varepsilon} W(K),
\end{aligned}
\]
since the mean width $W(H)$ is a nondecreasing function of $H$ (the variable
$H$ is ordered via the set inclusion). Taking the supremum over all choices of
$t_{0}<t_{1}\ldots<t_{m}$ in $I$ we obtain  \eqref{length} for
$C_{n}=1/\varepsilon$.\hfill$\blacktriangle$

\medskip

Therefore, the theorem will be proved, if we show that \eqref{estim2} holds
for some constant $\varepsilon>0$ which depends only on the dimension. Before
we proceed, we introduce some extra notation. Given $x,x^{\prime}$ in $\Gamma$
with $x^{\prime}\prec x$ we set
\begin{equation}
x_{0}:=\frac{1}{3}x+\frac{2}{3}x^{\prime}\ \hbox{ and }\
v_{0}:=\frac {x^{\prime}-x}{\Vert x^{\prime}-x\Vert}. \label{guy}
\end{equation}
For the sake of drawing pictures, the reader is invited to think
that $v_{0}=e_{1}$ (the first vector of the canonical basis of
$\mathbb{R}^{n}$), see Figure~1. Let us also set
\begin{equation}
\xi_{0}(y)=\frac{y-x_{0}}{\Vert y-x_{0}\Vert}\in\mathbb{S}^{n-1}
,\quad\text{for any }y\in\Gamma(x). \label{guy-1}
\end{equation}
Clearly, $x_{0}$, $v_{0}$ and $\xi_{0}(y)$ depend on the points $x,x^{\prime
},$ while the desired constant $\varepsilon$ does not. To determine this
constant, we shall again transform the problem into another one (see the
forthcoming Claim~2).

 \begin{figure}[h!]
 \centering
  \includegraphics[width=0.5\textwidth]{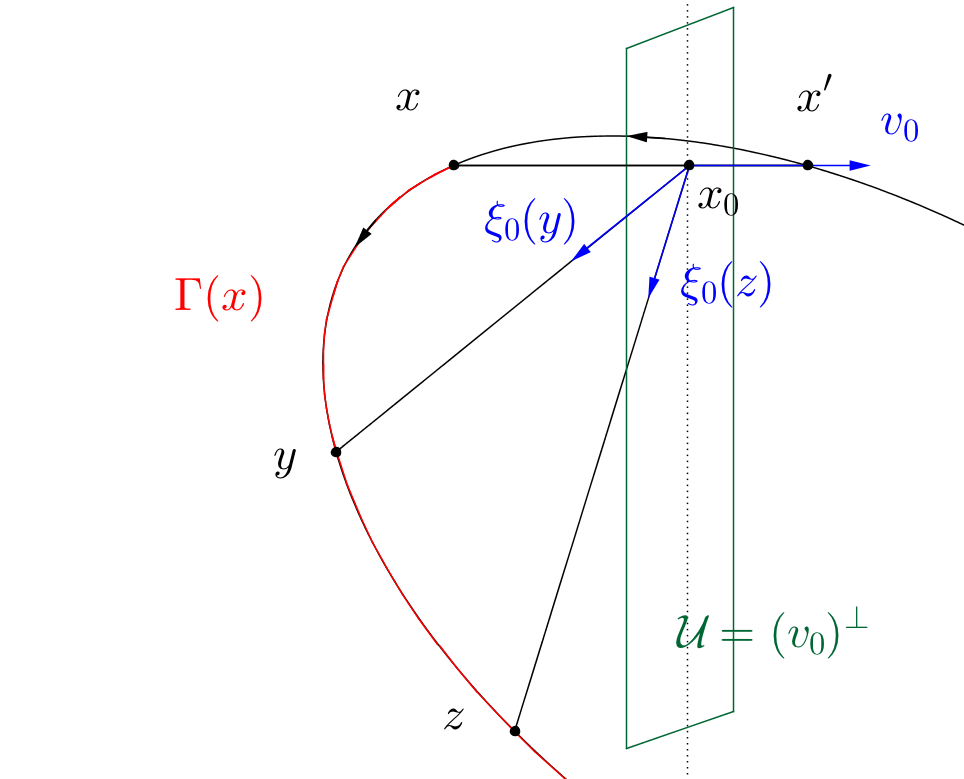}
 \caption{Controlling the tail of a self-contracted curve.}
 \end{figure} 

\textit{Claim~2}. Let us assume that there exists a constant
$0<\delta<2^{-4}$, depending only on the dimension $n$, such that
for all $x,x^{\prime}$ in $\Gamma$ with $x^{\prime}\prec x$ (and for
$x_{0},v_{0}$ defined by \eqref{guy}), there exists
$\bar{v}\in\mathbb{S}^{n-1}\cap B(v_{0},\delta)$ such that
\begin{equation}
\big\langle\overline{v},\xi_{0}(y)\big\rangle\leq-\delta^{2}\quad\text{
for all }y\in\Gamma(x). \label{CL21}
\end{equation}
Then \eqref{estim2} holds true (and consequently \eqref{length} follows).

\textit{Proof of Claim~2}. Assume that such a constant $\delta$ and a
vector $\bar{v}$ exist, so that \eqref{CL21} holds. Set
\[
V=\big\{v\in\mathbb{S}^{n-1}\,;\,\Vert v-\overline{v}\Vert\leq\delta
^{2}\big\}.
\]
Combining with \eqref{guy-1} and \eqref{CL21} we get
\begin{equation}
\langle v,y-x_{0}\rangle\leq0,\quad\text{for all }v\in V\;\text{and
} y\in\Gamma(x). \label{polar}
\end{equation}
Let us first explain intuitively why the above yields
\eqref{estim2}. Indeed, compared with $\Omega(x)$,
$\Omega(x^{\prime})$ has an extra piece coming from the segment
$[x_{0},x^{\prime}].$ This piece is protruding in all directions
$v\in V$ (which are relatively close to $v_{0}$), while
\eqref{polar} bounds uniformly the orthogonal projections of
$\Omega(x)$ onto the lines $\mathbb{R}v$. Therefore, the extra
contribution of the segment $[x_{0},x^{\prime}]$ in
$P_{v}(\Omega(x^{\prime}))$ becomes perceptible and can be
quantified in terms of $\|x_{0}-x^{\prime}\|$, uniformly in $V.$
Since the latter set $V$ has a positive measure, the estimation
\eqref{estim2} follows.

To proceed, observe that $\Omega(x^{\prime})$ contains the convex hull of
$\Omega(x)\cup\lbrack x,x^{\prime}]$, whence
\[
P_{v}(\Omega(x))\subset P_{v}(\Omega(x^{\prime})).
\]
In particular,
\begin{equation}
\mathcal{H}^{1}(P_{v}(\Omega(x)))\leq\mathcal{H}^{1}(P_{v}(\Omega(x^{\prime
})))\ \hbox{ for }v\in\mathbb{S}^{n-1}, \label{es-1}
\end{equation}
where $P_{v}$ denotes the orthogonal projection onto the line
$\mathbb{R}v.$ Let us now equip the latter with the obvious order
(stemming from the identification $\mathbb{R}v\cong\mathbb{R}$) and
let us identify $P_{v}(x_{0})$ with the zero element~$0$. Then
\eqref{polar} says that for all directions $v$ in $V$ we have
\[
\sup P_{v}(\Omega(x))\leq0<P_{v}(x^{\prime})\leq\sup P_{v}(\Omega(x^{\prime
})).
\]
Notice that
\[
\|x_{0}-x^{\prime}\|=\frac{1}{3}\|x-x^{\prime}\|,
\]
and that $V\subset B(v_{0},\delta+\delta^{2})$. Thus for every $v\in
V$ we have
\[
\langle v_{0},v\rangle\geq\langle
v_{0},v_{0}\rangle-\|v_{0}\|\|v_{0}
-v\|\geq1-\delta-\delta^{2}\geq7/8.
\]
This gives a lower bound for the length of the projected segment
$[x_{0},x^{\prime}]$ onto $\mathbb{R}v,$ which coincides, under the above
identification, with $P_{v}(x^{\prime})$. Thus
\[
P_{v}(x^{\prime})\geq\frac{7}{8}\left(  \frac{1}{3}\Vert
x-x^{\prime} \Vert\right)  >\frac{1}{4}\Vert x-x^{\prime}\Vert.
\]
This yields
\begin{equation}
\mathcal{H}^{1}(P_{v}(\Omega(x)))+\frac{1}{4}\Vert x-x^{\prime}\Vert
\leq\mathcal{H}^{1}(P_{v}(\Omega(x^{\prime})))\;\hbox{ for $v \in
V$.} \label{es-2}
\end{equation}
Integrating \eqref{es-2} for $v\in V$ and \eqref{es-1} for $v\in
\mathbb{S}^{n-1}\setminus V$, and summing up the resulting inequalities we
obtain \eqref{estim2} with
\[
\varepsilon=(4\sigma_{n})^{-1}\int_{V}du.
\]
Notice that  this bound only depends on $\delta$, so the claim follows.\hfill
$\blacktriangle$

\bigskip

Consequently, our next goal is to determine $\delta>0$ so that the
assertion of Claim~2 holds. The exact value of the parameter
$\delta$ is eventually given in \eqref{valueofdelta} and depends
only on the dimension. In particular, it works for any
self-contracted curve and any choice of points
$x,x^{\prime}\in\Gamma$. For the remaining part of the proof, it is
possible to replace $\delta$ by its precise value. Nevertheless, we
prefer not to do so, in order to illustrate how this value is
obtained. In the sequel, the only \textit{prior} requirement is the
bound $\delta\leq2^{-4}$.

Fix any $x,x^{\prime}$ in $\Gamma$ with $x^{\prime}\prec x$ and recall the
definition of $x_{0},$ $v_{0}$ in \eqref{guy} and $\xi_{0}(y)$, for
$y\in\Gamma(x)$ in \eqref{guy-1}. Based on this, we consider the orthogonal
decomposition
\begin{equation}
\mathbb{R}^{n}\approx\mathbb{R}v_{0}\oplus\mathcal{U} \label{ad-1}
\end{equation}
where $\mathcal{U}=\left(  v_{0}\right)  ^{\perp}$ is the orthogonal
hyperplane to $v_{0}.$ The vector $\overline{v}$ of the assertion of
Claim~2 will be taken of the form
\begin{equation}
\overline{v}=\frac{v_{0}-\delta\zeta}{\Vert v_{0}-\delta\zeta\Vert},
\label{vvww}
\end{equation}
where $\zeta$ is a unit vector in $\mathcal{U}$. This vector will be
determined later on, as an application of Lemma~\ref{caratheodory}; notice
however that for any $\zeta\in\mathbb{S}^{n-1} \cap\mathcal{U}$ we get
$\overline{v}\in\mathbb{S}^{n-1}\cap B(v_{0},\delta)$, as needed.

We set
\begin{equation}
\Gamma_{0}=\big\{y\in\Gamma(x)\,;\,\langle
v_{0},\xi_{0}(y)\rangle\leq -2\delta\big\}. \label{gamma-0}
\end{equation}
Notice that for every $y\in\Gamma_{0}$ and any $\overline{v}$ of the form
\eqref{vvww} we have
\[
\langle\bar{v},\xi_{0}(y)\rangle\leq-2\delta+\|v_{0}-\bar{v}\|\leq-\delta
\leq-\delta^{2}.
\]
Thus \eqref{CL21} is satisfied for all $y$ in $\Gamma_{0}$.

It remains to choose $\zeta$ (and adjust the value of $\delta$) so that
\eqref{CL21} would also hold for $y\in\Gamma(x)\setminus\Gamma_{0}.$ This will
be done in five steps. In the sequel we shall make use of the decomposition
\eqref{ad-1} of vectors $\xi_{0}(y),$ $y\in\Gamma(x)$, namely:
\begin{equation}
\xi_{0}(y)=\langle
v_{0},\xi_{0}(y)\rangle\,v_{0}+\xi_{0}^{\mathcal{U} }(y),
\label{ad-2}
\end{equation}
where $\xi_{0}^{\mathcal{U}}(y)$ is the orthogonal projection of $\xi_{0}(y)$
in $\mathcal{U}$.

\medskip

\textit{Step 1}. We establish that for all $y\in\Gamma(x)\setminus\{x\}$ it
holds
\begin{equation}
\langle
v_{0},\xi_{0}(y)\rangle\leq-\frac{\|x-x^{\prime}\|}{6\|y-x_{0}\|}<0.
\label{ad-3}
\end{equation}
Indeed, since $x^{\prime}\prec x\prec y$, we deduce from self-contractedness
that $\Vert y-x^{\prime}\Vert\geq\Vert y-x\Vert$, that is, $y$ lies at the
same half-space as $x$ defined by the mediatrix hyperplane of the segment
$[x,x^{\prime}]$. Hence, denoting by $P_{v_{0}}$ the orthogonal projection of
$\mathbb{R}^{n}$ on ${\mathbb{R}}v_{0}$ (which we brutally identify to
${\mathbb{R}}$ to write inequalities) we observe that
\[
P_{v_{0}}(y)\leq\frac{1}{2}P_{v_{0}}(x)+\frac{1}{2}P_{v_{0}}(x^{\prime
})=P_{v_{0}}(x)+\frac{1}{2}\Vert x-x^{\prime}\Vert,
\]
and consequently
\[
\langle v_{0},y-x_{0}\rangle=P_{v_{0}}(y-x_{0})\leq
P_{v_{0}}(x)+\frac{1}{2}\Vert
x-x^{\prime}\Vert-P_{v_{0}}(x_{0})=-\frac{1}{6}\Vert x-x^{\prime
}\Vert,
\]
thus, dividing by $\|y-x_{0}\|$, \eqref{ad-3} follows.

\bigskip

\textit{Step 2}. We establish that for all $y\in\Gamma(x)\setminus\Gamma_{0}$,
we have:
\begin{equation}
\|y-x_{0}\|>\frac{1}{12\delta}\|x-x^{\prime}\|\,; \label{ad-4}
\end{equation}
\begin{equation}
|\langle v_{0},\xi_{0}(y)\rangle|=\|\xi_{0}(y)-\xi_{0}^{\mathcal{U}}
(y)\|\leq2\delta\,; \label{ad-45}
\end{equation}
and
\begin{equation}
\sqrt{1-4\delta^{2}}\leq\|\xi_{0}^{\mathcal{U}}(y)\|\leq1\,.
\label{ad-6}
\end{equation}
\bigskip

Indeed, since $y\in\Gamma(x)\setminus\Gamma_{0}$, \eqref{ad-4}
follows easily by combining \eqref{gamma-0} with \eqref{ad-3}. The
same formulas yield that $\langle
v_{0},\xi_{0}(y)\rangle\in(-2\delta,0),$ whence $|\langle
v_{0},\xi_{0}(y)\rangle|\leq2\delta.$ In view of \eqref{ad-2}, both
\eqref{ad-45} and \eqref{ad-6} follow directly. This ends the proof
of Step 2.

\bigskip

Before we proceed, let us make the following observation, which
motivates Step 4 (compare forthcoming inequalities \eqref{positive}
and \eqref{casi-pos}). We set
\begin{equation}
\xi(y)=\frac{y-x}{\Vert y-x\Vert}\in\mathbb{S}^{n-1},\quad\text{for all
}y\in\Gamma(x)\setminus \{x\}. \label{ksi}
\end{equation}
It is easily seen that
\begin{equation}
\langle\xi(y),\xi(z)\rangle\geq0\text{, \quad for all }y,z\in\Gamma
(x)\setminus \{x\}. \label{positive}
\end{equation}
Indeed assuming $x\preceq y\prec z$ (the case $y=z$ is trivial), the
definition of self-contractedness yields
\[
\Vert z-x\Vert\geq\Vert z-y\Vert.
\]
Writing $z-y=(z-x)-(y-x)$ and squaring the above inequality, we obtain
\[
\Vert z-x\Vert^{2}\geq\Vert z-x\Vert^{2}+\Vert x-y\Vert^{2}-2\langle
z-x,y-x\rangle,
\]
which yields $\langle z-x,y-x\rangle\geq0$. Dividing by norms gives \eqref{positive}.

\bigskip

Our next objective is to show that vectors $\xi_{0}^{\mathcal{U}}(y),$
$y\in\Gamma(x)\setminus\Gamma_{0}$ satisfy a relaxed inequality of type
\eqref{positive}. We need the following intermediate step.

\bigskip

\textit{Step 3}. We show that for every $y\in\Gamma(x)\setminus\Gamma_{0},$ we
have:
\begin{equation}
\|\xi(y)-\xi_{0}(y)\|\,\leq\,32\,\delta. \label{ad-30}
\end{equation}

Indeed
\begin{align*}
\|\xi(y)-\xi_{0}(y)\|\  &
\leq\Big\|\frac{y-x}{\|y-x\|}-\frac{y-x_{0}}
{\|y-x\|}\Big\|\ +\ \Big\|\frac{y-x_{0}}{\|y-x\|}-\frac{y-x_{0}}{\|y-x_{0}\|}\Big\|\\
&  =\frac{\|x-x_{0}\|}{\|y-x\|}\ +\
|\frac{1}{\|y-x\|}-\frac{1}{\|y-x_{0}
\|}|\,\|y-x_{0}\|\\
&  \leq\frac{2\|x-x_{0}\|}{\|y-x\|}=\frac{4}{3}\frac{\|x-x^{\prime}
\|}{\|y-x\|}.
\end{align*}
On the other hand, using \eqref{ad-4} we get:
\[
\|y-x\|\geq\|y-x_{0}\|-\|x_{0}-x\|\geq(\frac{1}{12\delta}-\frac{2}
{3})\|x-x^{\prime}\|=\left(  \frac{1-8\delta}{16\delta}\right)
\left( \frac{4\|x-x^{\prime}\|}{3}\right)  .
\]
Combining the above inequalities, we deduce
\[
\|\xi(y)-\xi_{0}(y)\|\ \leq\frac{16\delta}{1-8\delta},
\]
thus \eqref{ad-30} follows thanks to the prior bound $\delta\leq2^{-4}.$

\bigskip

\textit{Step 4}. We now establish that for every $y,z\in\Gamma(x)\setminus
\Gamma_{0},$ we have:
\begin{equation}
\langle\xi_{0}^{\mathcal{U}}(y),\xi_{0}^{\mathcal{U}}(z)\rangle\geq
-65\,\delta. \label{casi-pos}
\end{equation}

Let us first calculate the scalar product of the unit vectors
$\xi_{0}(y)$ and $\xi_{0}(z)$.

\begin{align*}
\langle\xi_{0}(y),\xi_{0}(z)\rangle &  =\langle\xi(y),\xi_{0}(z)\rangle
+\langle\xi_{0}(y)-\xi(y),\xi_{0}(z)\rangle\\
&  =\langle\xi(y),\xi(z)\rangle+\langle\xi(y),\xi_{0}(z)-\xi(z)\rangle
+\langle\xi_{0}(y)-\xi(y),\xi_{0}(z)\rangle\\
&  \geq0-\|\xi_{0}(z)-\xi(z)\|-\|\xi_{0}(y)-\xi(y)\|\geq-64\,\delta\ ,
\end{align*}
where \eqref{positive} and \eqref{ad-30} have been used for the last estimation.

Using now the above estimation, the orthogonal decomposition \eqref{ad-2}
recalling \eqref{ad-45} and the prior bound $\delta\leq2^{-4}$ we obtain
successively:
\begin{align*}
\langle\xi_{0}^{\mathcal{U}}(y),\xi_{0}^{\mathcal{U}}(z)\rangle &  =\langle
\xi_{0}(y),\xi_{0}(z)\rangle-\langle v_{0},\xi_{0}(y)\rangle\langle v_{0}
,\xi_{0}(z)\rangle\\
&  \geq-64\delta-4\delta^{2}\geq-65\delta.
\end{align*}

\textit{Step 5}. We shall now be interested in the set
\[
\Sigma=\big\{\hat{\xi}_{0}^{\mathcal{U}}(y)\,;\,y\in\Gamma(x)\setminus
\Gamma_{0}\big\}\subset\mathcal{U}\cap\mathbb{S}^{n-1},
\]
where $\hat{\xi}_{0}^{\mathcal{U}}(y)=\xi_{0}^{\mathcal{U}}(y)/\|\xi
_{0}^{\mathcal{U}}(y)\|$ (notice that $\xi_{0}^{\mathcal{U}}(y)\neq\emptyset$
from \eqref{ad-6}). In view of \eqref{casi-pos} and the estimation
\eqref{ad-6}, we obtain that for all $y,z\in\Gamma(x)\setminus\Gamma_{0}$
\[
\langle\hat{\xi}_{0}^{\mathcal{U}}(y),\hat{\xi}_{0}^{\mathcal{U}}
(z)\rangle\geq\frac{-65\delta}{1-4\delta^{2}}\geq-81\delta.
\]
We are now ready to give a precise value for $\delta$, namely,
\begin{equation}
\delta=\Big(\frac{1}{3}\Big)^{3n}. \label{valueofdelta}
\end{equation}
Recalling that $n\geq2,$ under this choice the scalar product between two
elements of $\Sigma$ is bounded from below by the quantity
\[
-81\,\delta\geq-81\,\Big(\frac{1}{3}\Big)^{3n}\,\geq-\Big(\frac{1}{3}\Big)^{n}.
\]
Therefore, the subset $\Sigma$ of the unit sphere of $\mathcal{U}
\simeq{\mathbb{R}}^{n-1}$ satisfies the assumptions of
Lemma~\ref{caratheodory}. We deduce that there exists a unit vector
$\zeta \in\mathcal{U}\cap\mathbb{S}^{n-1}$ such that
\begin{equation}
\langle\zeta,\hat{\xi}_{0}^{\mathcal{U}}(y)\rangle\geq\big(\frac{1}{3}
\big)^{2n-1},\text{ for all }y\in\Gamma(x)\setminus\Gamma_{0}.
\label{valueof zeta}
\end{equation}
Taking $\delta$ and $\zeta$ as above in \eqref{vvww}, we obtain $\overline
{v}\in\mathbb{S}^{n-1}\cap B(v_{0},\delta)$. Let us now verify that
\eqref{CL21} holds. As already mentioned (see comments before Step~1), it
remains to consider the case $y\in\Gamma(x)\setminus\Gamma_{0}.$ Notice that
in view of \eqref{ad-3}
\[
\langle v_{0}-\delta\zeta,\xi_{0}(y)\rangle=\langle v_{0},\xi_{0}
(y)\rangle-\delta\,\langle\zeta,\xi_{0}(y)\rangle\,\leq\,-\delta\,\langle
\zeta,\xi_{0}(y)\rangle,
\]
while in view of \eqref{ad-1}, \eqref{ad-6} and the orthogonality between
$v_{0}$ and $\zeta$ we get
\[
\langle\zeta,\xi_{0}(y)\rangle=\langle\zeta,\xi_{0}^{\mathcal{U}}
(y)\rangle=\|\xi_{0}^{\mathcal{U}}(y)\|\,\langle\zeta,\hat{\xi}_{0}
^{\mathcal{U}}(y)\rangle\geq\,\sqrt{1-4\delta^{2}}\,\big(\frac{1}{3}\big)^{2n-1}
\,\geq\big(\frac{1}{3}\big)^{2n}.
\]
Assembling the above, and using the fact that $\|v_{0}-\delta\zeta
\|=\sqrt{1+\delta^{2}}>1$ we obtain
\[
\langle\bar{v},\xi_{0}(y)\rangle\leq-\delta\sqrt{1+\delta^{2}}\Big(\frac{1}
{3}\Big)^{2n}\leq-\delta\Big(\frac{1}{3}\Big)^{3n}=-\delta^{2}.
\]
This shows that the assumption made in Claim~2 is always fulfilled.
The proof is complete.
\end{proof}

\subsection{Arc-length reparameterization}
\label{sec32} Having in hand that the length of a bounded
self-contracted curve $\gamma:I\rightarrow{\mathbb{R}}^{n}$ is
finite (\emph{c.f.} Theorem \ref{main1}), the curve $\gamma$ may be
reparameterized by its arc-length. This reparameterization is
particularly interesting when the curve $\gamma$ is
\emph{continuous.} In this case, we shall see that the arc-length
parametrization is Lipschitz continuous and that the image
$\Gamma=\gamma(I)$ of the curve can also be realized by a self-expanded
curve.

Set $I(t)=I\cap(-\infty,t]$, for $t\in I$, and consider the
length function
\[
\left\{
\begin{array}
[c]{l}
L:I\rightarrow\lbrack0,\ell(\gamma)]\smallskip\\
L(t)=\ell(\gamma_{|I(t)}).
\end{array}
\right.
\]
Thus $L(t)$ is the length of the truncated curve $\gamma|_{I(t)}$
and consequently the function $t\mapsto L(t)$ is nondecreasing, but
not necessarily injective ---it is locally constant whenever
$t\mapsto\gamma(t)$ is locally stationary, see also
Remark~\ref{scmetric} (i). Let us assume that $\gamma$ is bounded and continuous.
Then, in view of Remark~\ref{scmetric}\ (ii), the curve can be
extended continuously to $\bar{I}$ and the set $\bar{\Gamma
}=\gamma(I)\cup\{\gamma_{\infty}\}=\gamma(\bar{I})$ is a compact
connected arc. Moreover, $t\mapsto L(t)$ is continuous and
$L(\bar{I})=[0,\ell (\gamma)].$ It follows easily that $L^{-1}(s)$
is a (possibly trivial) interval $[t_{1},t_{2}]$. Consequently, the
function
\begin{equation}
\left\{
\begin{array}
[c]{l}
\tilde{\gamma}:[0,\ell(\gamma)]\rightarrow{\mathbb{R}}^{n}\smallskip\\
\tilde{\gamma}(s)=\gamma(t),\text{ for any }t\in L^{-1}(s)
\end{array}
\right.  \label{twain}
\end{equation}
is well-defined.

\begin{proposition}
[Hidden regularity of continuous self-contracted curves]\label{Prop_hidden}Let
$\gamma:I\rightarrow{\mathbb{R}}^{n}$ be a bounded continuous self-contracted
curve. Then \eqref{twain} defines a Lipschitz self-contracted curve with the
same image $\Gamma=\gamma(I)=\tilde{\gamma}([0,\ell(\gamma))).$ In particular,
$\Gamma$ is also the image of some self-expanded curve.
\end{proposition}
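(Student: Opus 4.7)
The plan is to verify, in order, that $\tilde{\gamma}$ is well-defined, that it is $1$-Lipschitz, that it inherits the self-contracted property from $\gamma$, and then to invoke Lemma~\ref{reverse} to deduce the self-expanded reparameterization. The crucial preliminary observation is that Theorem~\ref{main1} gives $\ell(\gamma) < \infty$, which together with the continuity of $\gamma$ (extended to $\bar I$ via Remark~\ref{scmetric}(ii)) ensures that the length function $L: \bar I \to [0, \ell(\gamma)]$ is continuous, nondecreasing, and surjective.

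With $L$ continuous and monotone, each fiber $L^{-1}(s)$ is a closed (possibly degenerate) subinterval $[t_1, t_2] \subset \bar I$. On such a fiber, $\ell(\gamma|_{[t_1, t_2]}) = L(t_2) - L(t_1) = 0$, and since $\|\gamma(t_1) - \gamma(t_2)\| \leq \ell(\gamma|_{[t_1, t_2]})$, we must have $\gamma(t_1) = \gamma(t_2)$; continuity of $\gamma$ then forces $\gamma$ to be constant on $[t_1, t_2]$. This well-definedness is precisely what formula~\eqref{twain} requires. For the Lipschitz property, given $0 \leq s_1 \leq s_2 \leq \ell(\gamma)$, I pick any representatives $t_i \in L^{-1}(s_i)$ with $t_1 \leq t_2$ (possible by monotonicity of $L$) and write
\[
\|\tilde\gamma(s_2) - \tilde\gamma(s_1)\| = \|\gamma(t_2) - \gamma(t_1)\| \leq \ell(\gamma|_{[t_1, t_2]}) = L(t_2) - L(t_1) = s_2 - s_1.
\]

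For self-contractedness, given any $s_1 \leq s_2 \leq s_3$ in $[0, \ell(\gamma)]$, I again exploit monotonicity of $L$ to choose $t_i \in L^{-1}(s_i)$ with $t_1 \leq t_2 \leq t_3$; then the self-contraction inequality~\eqref{SC-ineq} applied to $\gamma$ yields
\[
\|\tilde\gamma(s_1) - \tilde\gamma(s_3)\| = \|\gamma(t_1) - \gamma(t_3)\| \geq \|\gamma(t_2) - \gamma(t_3)\| = \|\tilde\gamma(s_2) - \tilde\gamma(s_3)\|,
\]
so $\tilde\gamma$ is self-contracted. The image identity $\tilde\gamma([0, \ell(\gamma))) = \gamma(I) = \Gamma$ is immediate from surjectivity of $L$ together with the fact that the only value possibly added by $L^{-1}(\ell(\gamma))$ is the limit point $\gamma_\infty$ (which already lies in $\gamma(I)$ when $T_\infty \in I$, or is accumulated otherwise). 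Finally, applying Lemma~\ref{reverse} to the Lipschitz self-contracted curve $\tilde\gamma$, its time-reversal $\tilde\gamma^-$ is a self-expanded curve whose image is still $\Gamma$.

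The step I expect to be the main technical subtlety is establishing the continuity of $L: \bar I \to [0, \ell(\gamma)]$. This is a standard fact for rectifiable continuous curves, but it genuinely uses both the continuity of $\gamma$ on $\bar I$ (including continuity at the terminal point $T_\infty$, which is where Remark~\ref{scmetric}(ii) is indispensable) and the finiteness of the total length provided by Theorem~\ref{main1}; without either ingredient, $L$ could jump and the passage from fibers to a single-valued $\tilde\gamma$ would fail. Everything else amounts to a direct transfer of the metric inequalities from $\gamma$ to $\tilde\gamma$ through the monotone reparameterization.
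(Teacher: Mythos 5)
Your proposal is correct and takes essentially the same route as the paper: the heart of both arguments is the inequality $\|\gamma(t_2)-\gamma(t_1)\|\le L(t_2)-L(t_1)$, which you obtain from additivity of the length function plus the chord--arc bound, while the paper derives it directly from the partition definition of $\ell(\gamma)$ (its inequality \eqref{lip1}). Everything else --- constancy of $\gamma$ on the fibers of $L$, the transfer of \eqref{SC-ineq} through monotone choices of representatives, and the final appeal to Lemma~\ref{reverse} --- matches the paper's proof.
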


\begin{proof} Let $s,s^{\prime}\in\lbrack0,\ell(\gamma)]$
be given. Let $t,t^{\prime}\in I$ be such that $s=L(t)$ and
$s^{\prime}=L(t^{\prime})$. Assume that $s<s^{\prime}$ (thus a fortiori
$t<t^{\prime}$). We claim that
\begin{equation}
L(t^{\prime})\geq L(t)+d(\gamma(t),\gamma(t^{\prime})).\label{lip1}
\end{equation}
Indeed, recalling \eqref{length-formula}, for any $\varepsilon>0$,
there exists a finite sequence $t_{0}<\ldots<t_{m}$ in $I(t)$,
satisfying
\[
\Sigma_{1}=:\sum_{i=0}^{m-1}d(\gamma(t_{i}),\gamma(t_{i+1}))\geq
L(t)-\varepsilon.
\]
Since adding an extra point can only make the sum larger, we may
assume that $t_{m}=t$. Then add the point $t_{m+1}=t^{\prime}$ and
notice that $t_{0}<\ldots<t_{m}<t_{m+1}$ is a sequence in
$I(t^{\prime})$, thus
\[
\Sigma_{2}=:\sum_{i=0}^{m}d(\gamma(t_{i}),\gamma(t_{i+1}))=\Sigma_{1}
+d(\gamma(t),\gamma(t^{\prime}))\leq L(t^{\prime}).
\]
We deduce that
\[
L(t)-\varepsilon\leq\Sigma_{1}=\Sigma_{2}-d(\gamma(t),\gamma(t^{\prime}))\leq
L(t^{\prime})-d(\gamma(t),\gamma(t^{\prime})).
\]
Taking the limit as $\varepsilon\rightarrow0$ and get \eqref{lip1}.
It follows that
\[
\Vert\tilde{\gamma}(s^{\prime})-\tilde{\gamma}(s)\Vert=\Vert\gamma(t^{\prime
})-\gamma(t)\Vert\leq L(t^{\prime})-L(t)=s^{\prime}-s,
\]
which proves that $\tilde{\gamma}$ is $1$-Lipschitz on $[0,\ell(\gamma
)].$

It follows easily that $\tilde{\gamma}$ is self-contracted, while
the fact that $\tilde{\gamma}([0,\ell(\gamma)))=\gamma(I)$ is
straightforward. The last assertion follows from
Lemma~\ref{reverse}.
\end{proof}

\begin{remark}
[Improving the length bound]\label{Rem_improve}Although the
constants $C>0$ in  \eqref{mp-1} (for self-expanded curves) and
in \eqref{length} (for bounded self-contracted curves) stem from the
same method, the latter is based on nonsmooth arguments encompassing
nonregular curves and therefore it is not optimized. As a
consequence of Proposition~\ref{Prop_hidden}, given a continuous
self-contracted curve we can use \eqref{mp-1} to improve \emph{a
posteriori} the constant that bounds the length given
in~\eqref{length}. Notwithstanding, this trick cannot be applied to
any bounded self-contracted curve: in the discontinuous case, one
can still consider the length mapping $L(t)$, and use it to
construct a larger curve (whose image contains $\Gamma$) with a
Lipschitz parametrization, but which is possibly not
self-contracted.
\end{remark}

\bigskip

\section{Applications}

\subsection{Orbits of convex foliations}

\label{Section_foliations} As mentioned in the introduction,
self-contracted curves are naturally defined in a metric space
without prior regularity assumptions. Nonetheless, this notion has
been conceived to capture the behaviour of orbits of gradient
dynamical systems for (smooth) quasiconvex potentials in Euclidean
spaces~\eqref{diffeq}, or more generally, orbits of the subgradient
semi-flow defined by a nonsmooth convex function, see
\eqref{subgrad}. In both cases, orbits are at least, absolutely
continuous. In this section we broaden the above framework by
introducing a new concept of generalized solution to a nonsmooth
convex foliation, admitting (merely) continuous curves as
generalized orbits.

\subsubsection{Nonsmooth convex foliations}

Let us first give the definition of a convex foliation.

\begin{definition}
[Convex foliation]\label{Definition_foliation} A collection
$\{C_{r}\}_{r\geq0}$ of nonempty convex compact subsets of
$\mathbb{R}^{n}$ is a (global) \emph{convex foliation} of
$\mathbb{R}^{n}$ if
\begin{equation}
r_{1}<r_{2}\Longrightarrow C_{r_{1}}\subset\mathrm{int}\,C_{r_{2}}
\label{f11}
\end{equation}
and
\begin{equation}
\bigcup\nolimits_{r\geq0}\partial
C_{r}=\mathbb{R}^{n}\diagdown\mathrm{int} \,C_{0}. \label{f12}
\end{equation}

\end{definition}

In view of \eqref{f11}, relation \eqref{f12} is equivalent to the
fact that for every
$x\in\mathbb{R}^{n}\setminus\mathrm{int}\,C_{0},$ there exists a
\emph{unique} $r\in\lbrack0,\infty)$ such that $x\in\partial C_{r}.$
The above definition is thus equivalent to
\cite[Definition~(6.5)]{DLS}.

\begin{example}
[Foliation given by a function]

\textrm{(i)} The sublevel sets of a proper
(coercive) convex function $f:\mathbb{R}^{n}\rightarrow\mathbb{R}$ provide a
typical example of a convex foliation. Indeed, if $m=\min f$, we set
$C_{0}=f^{-1}(m)$ and $C_{r}=[f\leq m+r]$. Then \eqref{f12} follows from the fact that $f$ cannot be locally constant outside of $C_0$.

\textrm{(ii)} The
sublevel sets of a (coercive) quasiconvex function might fail to satisfy
\eqref{f12}, unless  $\mathrm{int}\,[f\leq\lambda]=[f<\lambda]$ for
all $\lambda>\min f.$ This condition is automatically satisfied if the
quasiconvex function is smooth and every critical point is a global minimizer.
A more general condition is to assume that $f$ is \emph{semistrictly
quasiconvex}, see for instance \cite{mY} for the relevant definition and characterizations.
\end{example}

\subsubsection{Generalized solutions and self-contractedness}

We consider a possibly unbounded interval $I=[0,T_{\infty})$ of
$\mathbb{R}$, where $T_{\infty}\in\mathbb{R}_{+}\cup\{+\infty\}$ and
a continuous injective curve $\gamma:I\rightarrow\mathbb{R}^{n}$.
For every $\tau\in I$ we define the set of all possible limits of
\emph{backward secants} at $\gamma(\tau)$ as follows:
\begin{equation}
\mathrm{\sec}^{-}(\tau):=\left\{  q\in\mathbb{S}^{n-1}:q=\lim_{t_{k}
\nearrow\tau^{-}}\,\frac{\gamma(t_{k})-\gamma(\tau)}{\|\gamma(t_{k})-\gamma(\tau)\|}\right\}
,\label{n11}
\end{equation}
where the notation $\{t_{k}\}_{k}\nearrow\tau^{-}$ indicates that
$\{t_{k}\}_{k}\rightarrow\tau$ and $t_{k}<\tau$ for all $k$. The set
$\mathrm{\sec}^{+}(\tau)$ is defined analogously using decreasing sequences
$\{t_{k}\}_{k}\searrow\tau^{+}$. The compactness of $\mathbb{S}^{n-1}$ guarantees
that both $\mathrm{\sec}^{-}(\tau)$ and $\mathrm{\sec}^{+}(\tau)$ are
nonempty. The following lemma relates $\mathrm{\sec}^{-}(\tau)$ and
$\mathrm{\sec}^{+}(\tau)$ with the derivative of $\gamma,$ in case the latter
exists and does not vanish.

\begin{lemma}
[Secants versus derivative]\label{Lemma_sec1}Assume
$\gamma:I\rightarrow \mathbb{R}^{n}$ is differentiable at $\tau\in
I$ and $\gamma'(\tau )\neq0.$ Then
\[
\mathrm{\sec}^{-}(\tau)=\left\{  -\frac{\gamma'(\tau)}{\|\gamma'(\tau)\|}\right\}  \text{\quad and\quad}\mathrm{\sec}^{+}(\tau)=\left\{
\frac{\gamma'(\tau)}{\|\gamma'(\tau)\|}\right\}  .
\]

\end{lemma}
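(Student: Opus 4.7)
The plan is to use the first-order Taylor expansion furnished by differentiability and show that each set of secants reduces to a single explicit unit vector. By hypothesis we may write
\[
\gamma(t) = \gamma(\tau) + (t-\tau)\gamma'(\tau) + o(t-\tau), \qquad t \to \tau,
\]
inside $I$, and the nonvanishing of $\gamma'(\tau)$ will ensure that the secant quotient is well-defined near $\tau$.

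Next, I would fix a sequence $\{t_k\} \nearrow \tau^-$ and set $h_k = t_k - \tau < 0$, so that $\gamma(t_k) - \gamma(\tau) = h_k\,\gamma'(\tau) + o(h_k)$. Since $\gamma'(\tau) \neq 0$, the triangle inequality gives
\[
\|\gamma(t_k) - \gamma(\tau)\| \geq |h_k|\bigl(\|\gamma'(\tau)\| - \|o(h_k)\|/|h_k|\bigr) > 0
\]
for $k$ sufficiently large, hence the normalized secants are defined. Factoring $|h_k|$ from numerator and denominator and using $h_k/|h_k| = -1$ transforms the quotient into
\[
\frac{-\gamma'(\tau) + o(h_k)/|h_k|}{\bigl\| -\gamma'(\tau) + o(h_k)/|h_k|\bigr\|}.
\]
Since $o(h_k)/|h_k| \to 0$ and the denominator is continuous in the numerator (and bounded away from zero near $\gamma'(\tau)$), the limit exists and equals $-\gamma'(\tau)/\|\gamma'(\tau)\|$. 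Because this value is independent of the chosen sequence, I conclude $\sec^{-}(\tau) = \{-\gamma'(\tau)/\|\gamma'(\tau)\|\}$.

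The argument for $\sec^{+}(\tau)$ is strictly symmetric: for $\{t_k\} \searrow \tau^+$ one has $h_k > 0$, so $h_k/|h_k| = +1$ and the same computation yields the single limit $\gamma'(\tau)/\|\gamma'(\tau)\|$. There is no serious obstacle in this lemma; the only point requiring care is verifying that $\gamma(t_k) \neq \gamma(\tau)$ eventually, which is precisely what the hypothesis $\gamma'(\tau) \neq 0$ guarantees via the first-order expansion.
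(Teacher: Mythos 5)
Your proof is correct and follows essentially the same route as the paper: both exploit differentiability to factor the secant quotient into the difference quotient (tending to $\gamma'(\tau)$) times a normalizing factor whose sign records whether $t_k$ approaches $\tau$ from the left or the right, with $\gamma'(\tau)\neq 0$ guaranteeing the quotient is eventually well defined. No gaps.
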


\begin{proof} Let $q\in\mathrm{\sec}^{-}(\tau)$ and let
$\{t_{k}\}_{k}\nearrow\tau^{-}$ be a sequence realizing this limit. Then
writing
\[
\frac{\gamma(t_{k})-\gamma(\tau)}{\|\gamma(t_{k})-\gamma(\tau)\|}=\left(
\frac{\gamma(t_{k})-\gamma(\tau)}{t_{k}-\tau}\right)  \left(  \frac
{-|\tau-t_{k}|}{\|\gamma(t_{k})-\gamma(\tau)\|}\right)
\]
and passing to the limit as $\{t_{k}\}_{k}\nearrow\tau^{-}$ we obtain the
result. The second assertion follows similarly.
\end{proof}

Assuming $\gamma(t)\notin\mathrm{int}\,C_{0}$ we denote by $r(t)$ the unique
positive number such that
\begin{equation}
\gamma(t)\in\partial C_{r(t)}. \label{n12}
\end{equation}
We are now ready to give the definition of generalized orbit to a convex foliation.

\begin{definition}
[Convex foliation orbits]\label{Definition_orbit-foliation} A
continuous curve $\gamma:I\rightarrow\mathbb{R}^{n}$ is called
generalized solution (orbit) of the convex foliation
$\{C_{r}\}_{r\geq0}$ if $\gamma(I)\cap\mathrm{int}
\,C_{0}=\emptyset$ and under the notation \eqref{n11} and \eqref{n12}
we have:

\begin{itemize}
\item[(i)] the function $t\mapsto r(t)$ is decreasing ;

\item[(ii)] $\mathrm{\sec}^{-}(t)\subset N_{C_{r(t)}}(\gamma(t))$ for all
$t\in I.$
\end{itemize}
\end{definition}

It follows directly from (i) that $\gamma$ is injective, and then (ii) makes sense. Also, every orbit of a convex
foliation is an injective mapping with bounded image ($\Gamma=\gamma(I)$ is
contained in the compact set $C_{r(0)}$). Recall the definition of the normal cone $N_{C_{r(t)}}$ in \eqref{NKu0}. If $\partial C_{r(t)}$ is a smooth
manifold (or more generally, if $\dim N_{C_{r(t)}}(\gamma(t))=1$) we have
$\mathrm{\sec}^{-}(t)=\{\nu_{r(t)}\}$ where $\nu_{r(t)}$ is the unit normal of
$\partial C_{r(t)}$ at $\gamma(t).$

In the sequel, we shall need the following lemma, which proof is an
easy exercise and will be omitted.

\begin{lemma}
[Criterium for decrease]\label{Lem_trivial}Let
$\varphi:[0,T]\rightarrow \mathbb{R}$ be a continuous function,
where $T\geq0$. Assume that for every $0<\tau\leq T,$ there exists
$\delta>0$ such that for all $t\in(\tau
-\delta,\tau)\cap\lbrack0,T]$ we have
\begin{equation}
\varphi(t)>\varphi(\tau).\label{dan-1}
\end{equation}
Then $\varphi$ is decreasing.
\end{lemma}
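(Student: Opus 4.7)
The plan is to argue by contradiction using a maximum-principle argument. Fix arbitrary $a<b$ in $[0,T]$; I will show $\varphi(a)\geq\varphi(b)$. Suppose, for contradiction, that $\varphi(a)<\varphi(b)$. By continuity of $\varphi$ on the compact set $[a,b]$, the maximum $M:=\max_{[a,b]}\varphi$ is attained. Define
\[
\tau^{\ast}:=\min\{t\in[a,b]:\varphi(t)=M\},
\]
which exists because the preimage $\varphi^{-1}(\{M\})\cap[a,b]$ is a nonempty closed subset of $[a,b]$. Since $\varphi(a)<\varphi(b)\leq M$, we must have $\tau^{\ast}>a\geq 0$.

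Now apply the hypothesis at $\tau=\tau^{\ast}>0$: there exists $\delta>0$ such that $\varphi(t)>\varphi(\tau^{\ast})=M$ for every $t\in(\tau^{\ast}-\delta,\tau^{\ast})\cap[0,T]$. Shrinking $\delta$ if necessary we may assume $\tau^{\ast}-\delta\geq a$, so that the left-neighborhood $(\tau^{\ast}-\delta,\tau^{\ast})$ is contained in $[a,b]$. Then any $t$ in that neighborhood satisfies $\varphi(t)>M$, contradicting the definition of $M$ as the maximum of $\varphi$ over $[a,b]$. Hence $\varphi(a)\geq\varphi(b)$ for all $a<b$, i.e. $\varphi$ is non-increasing.

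If the statement is meant in the strict sense, one final step upgrades this to strict decrease. Suppose $\varphi(a)=\varphi(b)$ for some $0\leq a<b\leq T$. Applying the hypothesis at $\tau=b>0$ produces $\delta>0$ and a point $t_{0}\in(\max\{a,b-\delta\},b)$ with $\varphi(t_{0})>\varphi(b)=\varphi(a)$, in direct conflict with the non-increasing property already established between $a$ and $t_{0}$.

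There is no real obstacle here: the only care needed is to take $\tau^{\ast}$ as the \emph{first} maximizer, since this is precisely what guarantees $\tau^{\ast}>a$ and therefore allows the left-hand neighborhood supplied by the hypothesis to lie inside $[a,b]$. The rest is a routine compactness-plus-continuity argument.
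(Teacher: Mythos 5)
Your proof is correct and complete; the paper itself omits the argument ("the proof is an easy exercise and will be omitted"), so there is nothing to compare against. Your choice of the \emph{first} maximizer $\tau^{\ast}$ on $[a,b]$ is exactly the right device to make the left-neighborhood from the hypothesis land inside $[a,b]$, and your final step correctly upgrades non-increase to strict decrease, which is the sense in which the paper uses the lemma.
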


We now prove that the generalized solutions of a convex foliation
are self-contracted curves. Notice that in view of
Theorem~\ref{main1} this entails that these curves are rectifiable
and have a finite length.

\begin{theorem}
[Self-contractedness of generalized orbits]\label{Theorem_orbit-foliation}
Every convex foliation orbit is a (continuous) self-contracted curve.
\end{theorem}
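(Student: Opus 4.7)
The plan is to prove self-contractedness by fixing an endpoint $t_3\in I$ and showing that the continuous function $\varphi(t):=\|\gamma(t)-\gamma(t_3)\|^2$ is (strictly) decreasing on $[0,t_3]\cap I$, via Lemma~\ref{Lem_trivial}. Equivalently, for every $\tau\in(0,t_3]$ I must produce $\delta>0$ such that $\varphi(t)>\varphi(\tau)$ for $t\in(\tau-\delta,\tau)$. The boundary case $\tau=t_3$ is immediate: $\varphi(\tau)=0$, and injectivity of $\gamma$ (a consequence of condition (i)) gives $\varphi(t)>0$ for all $t<\tau$. So the substance lies in the case $\tau<t_3$.

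The key geometric observation is that $\gamma(t_3)$ lies not merely in $C_{r(\tau)}$ but in its interior. Indeed, since $r$ is (strictly) decreasing, $r(t_3)<r(\tau)$, and the nesting \eqref{f11} yields $\gamma(t_3)\in C_{r(t_3)}\subset\mathrm{int}\,C_{r(\tau)}$. Pick $\rho>0$ with $B(\gamma(t_3),\rho)\subset C_{r(\tau)}$. Then for every unit vector $q\in N_{C_{r(\tau)}}(\gamma(\tau))$, the point $\gamma(t_3)+\rho q$ still belongs to $C_{r(\tau)}$, so the defining inequality \eqref{NKu0} of the normal cone gives
\[
\langle q,\gamma(t_3)-\gamma(\tau)\rangle\le -\rho.
\]
By hypothesis (ii), this bound applies uniformly to every $q\in\sec^{-}(\tau)$.

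The main obstacle is upgrading this from the limit points in $\sec^{-}(\tau)$ to a uniform estimate on the genuine backward secants $s(t):=(\gamma(t)-\gamma(\tau))/\|\gamma(t)-\gamma(\tau)\|$ as $t\nearrow\tau^{-}$. The required lemma is purely topological: for every $\varepsilon>0$ there is $\delta>0$ such that $\mathrm{dist}(s(t),\sec^{-}(\tau))<\varepsilon$ for all $t\in(\tau-\delta,\tau)$. If this were false, some sequence $t_k\nearrow\tau^{-}$ would have $s(t_k)$ bounded away from $\sec^{-}(\tau)$; by compactness of $\mathbb{S}^{n-1}$ a subsequence would converge to some $q\in\mathbb{S}^{n-1}$, which by definition belongs to $\sec^{-}(\tau)$, a contradiction. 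Choosing $\varepsilon$ small enough that $\varepsilon\|\gamma(t_3)-\gamma(\tau)\|\le\rho/2$, one concludes that $\langle s(t),\gamma(t_3)-\gamma(\tau)\rangle\le-\rho/2$ for $t\in(\tau-\delta,\tau)$.

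Finally, the routine expansion
\[
\varphi(t)-\varphi(\tau)=\|\gamma(t)-\gamma(\tau)\|^{2}-2\langle \gamma(t)-\gamma(\tau),\gamma(t_3)-\gamma(\tau)\rangle=\|\gamma(t)-\gamma(\tau)\|\Big(\|\gamma(t)-\gamma(\tau)\|-2\langle s(t),\gamma(t_3)-\gamma(\tau)\rangle\Big)
\]
is strictly positive since the first factor is positive (injectivity) and the second factor is at least $\rho>0$. Lemma~\ref{Lem_trivial} then gives the desired monotonicity of $\varphi$, and since $t_3$ was arbitrary, $\gamma$ satisfies \eqref{SC-ineq}.
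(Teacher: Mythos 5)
Your proof is correct and follows essentially the same route as the paper: a quantitatively negative inner product between the normal cone at $\gamma(\tau)$ and the direction toward the future point (coming from that point lying in $\mathrm{int}\,C_{r(\tau)}$), a compactness argument transferring this from $\sec^{-}(\tau)$ to the actual backward secants near $\tau$, and then Lemma~\ref{Lem_trivial}. The only (harmless, and in fact slightly cleaner) deviation is that you localize to the single point $\gamma(t_3)$ and extract the explicit margin $-\rho$ from a ball inclusion, whereas the paper obtains a uniform constant $-a$ over the whole compact tail $\overline{\Gamma_\varepsilon}$ by a compactness contradiction.
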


\begin{proof} Let $\gamma:I\rightarrow\mathbb{R}^{n}$ be
a continuous injective curve which is a generalized solution of the
convex foliation $\{C_{r}\}_{r\geq0}$ in the sense of
Definition~\ref{Definition_orbit-foliation}. Since $\gamma(I)\subset
C_{r(0)}$, the curve is obviously bounded. Fix $\tau\in I$ and
$\varepsilon>0$ such that $\tau+\varepsilon\in I$. We set
\[
\Gamma_{\varepsilon}:=\{\gamma(t):t\geq\tau+\varepsilon,\ t\in I\}.
\]
Notice that the compact set $\overline{\Gamma_{\varepsilon}}$ is contained in
$\mathrm{int}\,C_{r(\tau+\varepsilon^{\prime})}$ for all $0\leq\varepsilon
^{\prime}<\varepsilon.$

\medskip

\textit{Claim~1}. There exists $a>0$, such that for all $v\in
N_{C_{r(\tau)} }(\gamma(\tau))$ and all
$x\in\overline{\Gamma_{\varepsilon}}$ we have
\begin{equation}
\langle\,v,\,\frac{x-\gamma(\tau)}{\|x-\gamma(\tau)\|}\,\rangle
\,<\,-a\,<\,0\,. \label{existsa}
\end{equation}

\textit{Proof of Claim~1}. Assume, towards a contradiction, that
\eqref{existsa} fails. Since the sets $\overline{\Gamma_{\epsilon}}$
and $N_{C_{r(\tau)}}(\gamma(\tau))\cap\mathbb{S}^{n-1}$ are compact,
we easily deduce that for some unit normal $v\in
N_{C_{r(\tau)}}(\gamma(\tau))$ and
$x\in\overline{\Gamma_{\epsilon}}$ we have $\langle
v,x-\gamma(\tau)\rangle \geq 0$. Since $x\in\mathrm{int\,}C_{r(\tau)}$,
we deduce that for some $y\in C_{r(\tau)}$ we have $\langle
v,y-\gamma(\tau)\rangle>0$. This contradicts the fact that $v\in
N_{C_{r(\tau)}}(\gamma(\tau))$. Thus \eqref{existsa}
holds.\hfill$\blacktriangle$

\textit{Claim~2}. For every $\varepsilon>0$ there exists $\delta>0$
such that for all $t^{-},t^{+}\in I$ such that
$t^{-}\in\lbrack\tau-\delta,\tau)$ and $t^{+}\geq\tau+\varepsilon$
we have
\begin{equation}
\langle\gamma(t^{-})-\gamma(\tau),\,\gamma(t^{+})-\gamma(\tau)\rangle
\,<0\,.\label{inequaa}
\end{equation}

\textit{Proof of Claim~2}. Fix $\varepsilon>0$ and let $a>0$ be
given by Claim~1. Let
$N^{1}=N_{C_{r(\tau)}}(\gamma(\tau))\cap\mathbb{S}^{n-1}$ denote the (compact)
set of unit normals at $\gamma(\tau)$, let
$U_{\alpha}$:$=N^{1}+B(0,\alpha)$ be its $\alpha$-enlargement, and
let $N_{\alpha}$ denote the closed convex cone generated by
$U_{\alpha}$. It follows from \eqref{existsa} that if $\alpha > 0$ is chosen small enough (depending on $a$), then for all $w\in
N_{\alpha}$, and all $x\in\overline{\Gamma_{\epsilon}}$, we have
$\langle w,x-\gamma(\tau)\rangle<0$. Since
$\mathrm{\sec}^{-}(\tau)\subset N^{1}\subset U_{a}$, we deduce that
for some $\delta>0$ we have
\[
\frac{\gamma(t)-\gamma(\tau)}{\Vert\gamma(t)-\gamma(\tau)\Vert}\in U_{\alpha
},\quad\text{for all }t\in(\tau-\delta,\tau)\cap I.
\]
This establishes \eqref{inequaa}.\hfill$\blacktriangle$

\medskip

Notice that \eqref{inequaa} yields
\begin{equation}
\|\gamma(t^{-})-\gamma(t^{+})\|\,>\,\|\gamma(\tau)-\gamma(t^{+})\|.\label{t11}
\end{equation}
We now prove that the curve $\gamma$ is self-contracted. Indeed, fix
$0\leq t_{1}<t_{2}<t_{3}$ in $I$ and consider the real-valued
function
\[
\left\{
\begin{array}
[c]{l}
\varphi:[0,t_{2}]\rightarrow\mathbb{R}\medskip\\
\varphi(t)=\|\gamma(t)-\gamma(t_{3})\|
\end{array}
\right.
\]
Applying Claim~2 for any $0<\tau\leq t_{2}$ and for
$\varepsilon=t_{3}-t_{2}$ we deduce that for some $\delta_{1}>0$ and
all $t\in(\tau-\delta_{1},\tau )\cap\lbrack0,t_{2}]$ we have
$\varphi(t)>\varphi(\tau).$ The conclusion follows from
Lemma~\ref{Lem_trivial}. Since $t_{1},t_{2},t_{3}$ are arbitrarily
chosen we deduce that $\gamma$ is self-contracted.
\end{proof}

As a consequence of Theorem~\ref{Theorem_orbit-foliation} and
\cite[Proposition~2.2]{DLS} the solution curve $\gamma$ has finite
length. In particular, the curve converges as $t\rightarrow
T_{\infty}$ to some limit point
\[
\gamma_{\infty}:=\lim_{t\rightarrow T_{\infty}}\gamma(t)\in\partial
C_{r(T_{\infty})}
\]
so that the mapping $\gamma$ can be continuously extended to
$[0,T_{\infty}]$, by setting $\gamma(T_{\infty}):=\gamma_{\infty}$.

\begin{remark}
[Generalized versus classical solutions]\label{Rem-arisd} Let
$\gamma :I\rightarrow\mathbb{R}^{n}$ be a convex foliation orbit and
$\Gamma =\gamma(I)$. Although $\gamma$ is merely continuous,
Theorem~\ref{Theorem_orbit-foliation} guarantees its rectifiability.
Thus Proposition~\ref{Prop_hidden} applies, and the curve can be
reparameterized by its arc-length to obtain a Lipschitz curve with
the same image. The new curve
$\gamma_{\ast}:[0,\ell(\gamma)]\rightarrow\mathbb{R}^n$ satisfies
$\|\gamma'_{\ast}(s)\|=1$ and
\[
-\gamma'_{\ast}(s)\in N_{C(r(s))}(\gamma_{\ast}(s))
\]
for almost all $s\in\lbrack0,\gamma(\ell)]$. It follows that every generalized
orbit of a convex foliation gives rise to a classical solution (Lipschitz orbit).
\end{remark}

\subsubsection{Smooth solutions and strong self-contractedness}

Let $\gamma:I\rightarrow\mathbb{R}^{n}$ be a continuous curve, where
$I=[0,T_{\infty})$, let $T\in I$ and denote by $\Omega(T)$ the
convex hull of the tail of the curve, that is,
\[
\Omega(T):=\mathrm{conv\,}\{\gamma(t):t\geq T\}.
\]
Let us give the following definitions.

\begin{definition}
[Strong self-contractedness]\label{def-strong-sc} A continuous injective curve
$\gamma:I\rightarrow \mathbb{R}^{n}$ is called

\begin{itemize}

\item \emph{strictly self-contracted,} if for $t_{1},t_{2},t_{3}$ in $I$ such
that $t_{1}<t_{2}<t_{3}$ we have
\[
\|\gamma(t_{1})-\gamma(t_{3})\|>\|\gamma(t_{2})-\gamma(t_{3})\|.
\]

\item \emph{strongly self-contracted,} if for all $\tau\in I$ we have
\begin{equation}
\mathrm{\sec}^{-}(\tau)\subset\mathrm{int\,}N_{\Omega(\tau)}(\gamma(\tau)).
\label{str-SC}
\end{equation}

\end{itemize}
\end{definition}

Of course every strictly self-contracted
curve is self-contracted. A careful look at the  proof of
Theorem~\ref{Theorem_orbit-foliation} actually reveals that the
orbits of the convex foliation are strictly self-contracted curves.
 The following proposition relates
the notions of strict and strong self-contractedness.

\begin{proposition}[Strong versus strict self-contractedness]
\label{Prop_strong-strict}Every strongly self-contracted curve is
strictly self-contracted.
\end{proposition}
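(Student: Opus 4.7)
The plan is to fix $t_{1}<t_{2}<t_{3}$ in $I$ and, following the template of Theorem~\ref{Theorem_orbit-foliation}, to apply Lemma~\ref{Lem_trivial} to the continuous function
\[
\varphi(t)\,:=\,\|\gamma(t)-\gamma(t_{3})\|,\qquad t\in[t_{1},t_{2}],
\]
and then to upgrade the resulting decrease to the strict endpoint inequality $\varphi(t_{1})>\varphi(t_{2})$. The heart of the argument consists in verifying, at every $\tau\in(t_{1},t_{2}]$, a local strict decrease of $\varphi$ on a left-neighbourhood of $\tau$; this is the only place where the strong self-contracted hypothesis is used.

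To exploit the inclusion $\mathrm{\sec}^{-}(\tau)\subset\mathrm{int\,}N_{\Omega(\tau)}(\gamma(\tau))$, I first observe that $\mathrm{\sec}^{-}(\tau)$ is a compact subset of $\mathbb{S}^{n-1}$, being the cluster set at $\tau^{-}$ of a family taking values on the sphere. The continuous function $q\mapsto\mathrm{dist}(q,\mathbb{R}^{n}\setminus N_{\Omega(\tau)}(\gamma(\tau)))$ is strictly positive on $\mathrm{\sec}^{-}(\tau)$, so it admits a positive minimum $\alpha>0$ there. Consequently $\overline{B}(q,\alpha)\subset N_{\Omega(\tau)}(\gamma(\tau))$ for every $q\in\mathrm{\sec}^{-}(\tau)$, and testing the defining inequality of the normal cone with the vector $q+\alpha(u-\gamma(\tau))/\|u-\gamma(\tau)\|$ yields the quantitative bound
\[
\langle q,u-\gamma(\tau)\rangle\,\leq\,-\alpha\,\|u-\gamma(\tau)\|
\]
for every $q\in\mathrm{\sec}^{-}(\tau)$ and every $u\in\Omega(\tau)\setminus\{\gamma(\tau)\}$.

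Specialising to $u=\gamma(t_{3})\in\Omega(\tau)$ (with $\gamma(t_{3})\neq\gamma(\tau)$ by the injectivity built into Definition~\ref{def-strong-sc}) and recalling that the normalized backward secants $(\gamma(t)-\gamma(\tau))/\|\gamma(t)-\gamma(\tau)\|$ accumulate in $\mathrm{\sec}^{-}(\tau)$ as $t\nearrow\tau^{-}$, one produces $\delta>0$ and $\beta>0$ such that
\[
\langle\gamma(t)-\gamma(\tau),\gamma(t_{3})-\gamma(\tau)\rangle\,\leq\,-\beta\,\|\gamma(t)-\gamma(\tau)\|\quad\text{for all }t\in(\tau-\delta,\tau).
\]
Expanding the square then gives
\[
\varphi(t)^{2}-\varphi(\tau)^{2}\,=\,\|\gamma(t)-\gamma(\tau)\|^{2}-2\langle\gamma(t)-\gamma(\tau),\gamma(t_{3})-\gamma(\tau)\rangle\,\geq\,\|\gamma(t)-\gamma(\tau)\|^{2}+2\beta\,\|\gamma(t)-\gamma(\tau)\|,
\]
which is strictly positive because $\gamma(t)\neq\gamma(\tau)$ by injectivity; hence $\varphi(t)>\varphi(\tau)$.

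Lemma~\ref{Lem_trivial} now gives that $\varphi$ is decreasing on $[t_{1},t_{2}]$. To upgrade to a strict inequality at the endpoints, apply the local estimate above at $\tau=t_{2}$: this produces $t'\in(t_{1},t_{2})$ with $\varphi(t')>\varphi(t_{2})$, and the (weak) decrease on $[t_{1},t']$ yields $\varphi(t_{1})\geq\varphi(t')>\varphi(t_{2})$, as required. The main obstacle is the compactness/continuity step converting the qualitative containment $\mathrm{\sec}^{-}(\tau)\subset\mathrm{int\,}N_{\Omega(\tau)}(\gamma(\tau))$ into a uniform linear separation of $\gamma(t_{3})-\gamma(\tau)$ from every element of $\mathrm{\sec}^{-}(\tau)$; once that is in hand, the remainder follows the scheme of Claim~2 in the proof of Theorem~\ref{Theorem_orbit-foliation}.
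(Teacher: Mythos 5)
Your proof is correct and follows essentially the same route as the paper's: define $\varphi(t)=\|\gamma(t)-\gamma(t_{3})\|$, use the containment $\mathrm{\sec}^{-}(\tau)\subset\mathrm{int\,}N_{\Omega(\tau)}(\gamma(\tau))$ to get $\langle\gamma(t)-\gamma(\tau),\gamma(t_{3})-\gamma(\tau)\rangle<0$ on a left-neighbourhood of $\tau$, and conclude via Lemma~\ref{Lem_trivial}. You simply carry out more carefully what the paper leaves implicit (the compactness argument quantifying the open containment, and the verification at every $\tau\in(t_{1},t_{2}]$ rather than only at $t_{2}$).
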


\begin{proof} Fix $t_{1}<t_{2}<t_{3}$ in $I$ and apply
\eqref{str-SC} for $\tau=t_{2}$. From the definition of
$\mathrm{\sec} ^{-}(\tau)$, see \eqref{n11}, we deduce that for some
$\delta>0$ and all $t\in(t_{2}-\delta,t_{2})$ we have
\[
\langle\gamma(t)-\gamma(t_{2}),\,\gamma(t_{3})-\gamma(t_{2})\rangle<0.
\]
Setting $\varphi(t)=\|\gamma(t)-\gamma(t_{3})\|,$ the above
inequality guarantees that Lemma~\ref{Lem_trivial} applies. The
proof is complete.
\end{proof}

Our next objective is to show that smooth orbits of a convex foliation are
actually strongly self-contracted curves. We shall need the following lemma.

\begin{lemma}
[Differentiability point of a convex foliation
orbit]\label{Lemma_ss2}Let $\gamma:I\rightarrow\mathbb{R}^{n}$ be a
continuous orbit of a convex foliation and assume that $\gamma$ is
differentiable at some $\tau>0$ with $\gamma'(\tau)\neq0$. Then
\begin{equation}
-\gamma'(\tau)\in\mathrm{int\,}N_{\Omega(\tau)}(\gamma(\tau
)).\label{str-sc}
\end{equation}

\end{lemma}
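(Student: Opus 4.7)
The goal is to show that $v_0:=-\gamma'(\tau)/\|\gamma'(\tau)\|$ lies in $\mathrm{int}\,N_{\Omega(\tau)}(\gamma(\tau))$; since the normal cone is a cone, this is equivalent to the desired interior statement for $-\gamma'(\tau)$. My first move would be to unpack the interior condition into the following quantitative dual form: plugging $v_0+\eta h$ with $\|h\|\le 1$ into the definition of $N_{\Omega(\tau)}(\gamma(\tau))$ and optimising over $h$, one sees that $v_0\in\mathrm{int}\,N_{\Omega(\tau)}(\gamma(\tau))$ if and only if there exists $\eta>0$ such that
\[
\langle v_0,\,y-\gamma(\tau)\rangle\leq-\eta\,\|y-\gamma(\tau)\|\quad\text{for every }y\in\Omega(\tau).
\]
By Lemma~\ref{Lemma_sec1} we have $\mathrm{sec}^-(\tau)=\{v_0\}$, so Definition~\ref{Definition_orbit-foliation}(ii) places $v_0$ in $N_{C_{r(\tau)}}(\gamma(\tau))$.

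I would establish the displayed inequality first for $y=\gamma(t)$ with $t\ge\tau$, splitting the range of $t$ in two. For fixed $\varepsilon>0$ with $\tau+\varepsilon\in I$ (the ``far'' regime), monotonicity of $r$ together with \eqref{f11} show that $\overline{\{\gamma(t):t\ge\tau+\varepsilon\}}$ is a compact subset of $C_{r(\tau+\varepsilon)}\subset\mathrm{int}\,C_{r(\tau)}$. Applying the argument that proves Claim~1 in the proof of Theorem~\ref{Theorem_orbit-foliation}, but only to the single unit normal $v_0$, produces a constant $a=a(\varepsilon)>0$ with $\langle v_0,\gamma(t)-\gamma(\tau)\rangle\le -a\,\|\gamma(t)-\gamma(\tau)\|$ whenever $t\ge\tau+\varepsilon$. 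For the ``near'' regime $t\in(\tau,\tau+\varepsilon]$, differentiability at $\tau$ yields $\gamma(t)-\gamma(\tau)=(t-\tau)\gamma'(\tau)+o(t-\tau)$, so the unit secant $(\gamma(t)-\gamma(\tau))/\|\gamma(t)-\gamma(\tau)\|$ tends to $-v_0$ as $t\to\tau^+$. Shrinking $\varepsilon$ if needed, this forces $\langle v_0,\gamma(t)-\gamma(\tau)\rangle\le-\tfrac{1}{2}\|\gamma(t)-\gamma(\tau)\|$ on $(\tau,\tau+\varepsilon]$, and setting $\eta_0=\min(a,\tfrac{1}{2})$ merges the two estimates.

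To propagate the bound from the curve to the convex hull $\Omega(\tau)$, given $y=\sum_i\lambda_i\gamma(t_i)$ with $\lambda_i\ge0$, $\sum_i\lambda_i=1$ and $t_i\ge\tau$, linearity of the scalar product and the triangle inequality give
\[
\langle v_0,y-\gamma(\tau)\rangle=\sum_i\lambda_i\langle v_0,\gamma(t_i)-\gamma(\tau)\rangle\le-\eta_0\sum_i\lambda_i\|\gamma(t_i)-\gamma(\tau)\|\le-\eta_0\,\|y-\gamma(\tau)\|,
\]
and continuity of both sides extends the bound to the closure $\Omega(\tau)$. This proves $v_0\in\mathrm{int}\,N_{\Omega(\tau)}(\gamma(\tau))$, which by positive homogeneity of the normal cone gives $-\gamma'(\tau)\in\mathrm{int}\,N_{\Omega(\tau)}(\gamma(\tau))$.

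The main obstacle I anticipate is precisely the last propagation step: the ``margin'' inequality is not linear in $y$, so it is not preserved under convex combinations in the na\"ive sense. What saves us is that the right-hand side scales \emph{linearly} with $\|y-\gamma(\tau)\|$, so the triangle inequality absorbs the extra norm on the right. A subsidiary technical check is the uniform positivity of the constant $a(\varepsilon)$ in the far regime, which ultimately reduces to the strict separation of $\gamma(\tau)\in\partial C_{r(\tau)}$ from the compact set $\overline{\{\gamma(t):t\ge\tau+\varepsilon\}}\subset\mathrm{int}\,C_{r(\tau)}$ by the supporting hyperplane associated to $v_0$.
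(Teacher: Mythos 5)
Your proposal is correct and follows essentially the same route as the paper's proof: both split the tail into a near regime handled by differentiability (via Lemma~\ref{Lemma_sec1}) and a far regime handled by the compactness/separation argument of Claim~1 in Theorem~\ref{Theorem_orbit-foliation}, then pass to $\Omega(\tau)$ using that the set $\{x:\langle v_0,x-\gamma(\tau)\rangle\le-\eta\|x-\gamma(\tau)\|\}$ is a closed convex cone. The only difference is presentational: you spell out the convex-combination and triangle-inequality step and the quantitative characterization of $\mathrm{int}\,N_{\Omega(\tau)}(\gamma(\tau))$, which the paper leaves implicit.
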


\begin{proof} In order to simplify notation we assume $\gamma
(\tau)=0$. (There is no loss of generality in doing this.) In view of Lemma
\ref{Lemma_sec1} there exist $\varepsilon>0,$ such that for all $t^{+}\in
(\tau,\tau+\varepsilon)$ we have
\begin{equation}
\langle\frac{\gamma'(\tau)}{\|\gamma'(\tau)\|},\frac{\gamma(t^{+}
)}{\|\gamma(t^{+})\|}\rangle\,>\frac{1}{2}. \label{ad1}
\end{equation}
Since the compact set $\Omega(\tau+\varepsilon)$ is contained in
$\mathrm{int}\,C_{r(\tau)},$ arguing as in Claim~1 of the proof of
Theorem~\ref{Theorem_orbit-foliation}, we deduce that for some $a>0$
and for all $t^{+}\geq\tau+\varepsilon$ we have
\begin{equation}
\langle-\frac{\gamma'(\tau)}{\|\gamma'(\tau)\|},\frac{\gamma(t^{+}
)}{\|\gamma(t^{+})\|}\rangle<-a. \label{ad2}
\end{equation}
Changing $a$ into $\min(a,1/2)$ if necessary, we deduce that
\eqref{ad2} holds true for all $t^{+} > \tau$. We deduce that the
tail of the curve for $t\geq \tau$ is contained in the convex cone
$$\Big\{ x\in\mathbb{R}^n  \; : \; \langle \frac{\gamma'(\tau)}{\|\gamma'(\tau)\|} , x \rangle \leq -a \|x\|\Big\},$$
 hence so does $\Omega(\tau)$. This easily yields
$-\gamma'(\tau)\in\mathrm{int\,}N_{\Omega(\tau)}(\gamma
(\tau))$.
\end{proof}

\begin{corollary}
[Strong self-contractedness of smooth orbits]\label{Cor_strongSC}Every $C^{1}$
convex foliation orbit with no stationary point is a strongly self-contracted curve.
\end{corollary}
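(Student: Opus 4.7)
The plan is to combine the two preceding lemmas (Lemma~\ref{Lemma_sec1} and Lemma~\ref{Lemma_ss2}) in a direct way, since together they essentially pin down the content of the corollary. The hypothesis is that $\gamma:I\to\mathbb{R}^n$ is a $C^{1}$ convex foliation orbit satisfying $\gamma'(\tau)\neq 0$ for every $\tau\in I$. What I need to verify is the inclusion
\[
\mathrm{\sec}^{-}(\tau)\subset\mathrm{int\,}N_{\Omega(\tau)}(\gamma(\tau))\qquad \text{for all } \tau\in I,
\]
which is precisely the definition of strong self-contractedness.

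First, for every fixed $\tau\in I$, since $\gamma$ is differentiable at $\tau$ with $\gamma'(\tau)\neq 0$, Lemma~\ref{Lemma_sec1} immediately identifies $\mathrm{\sec}^{-}(\tau)$ as the single unit vector $-\gamma'(\tau)/\|\gamma'(\tau)\|$. Thus the whole problem reduces to checking that this one vector lies in $\mathrm{int\,}N_{\Omega(\tau)}(\gamma(\tau))$.

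Next I would invoke Lemma~\ref{Lemma_ss2}, which directly yields $-\gamma'(\tau)\in\mathrm{int\,}N_{\Omega(\tau)}(\gamma(\tau))$. Since $N_{\Omega(\tau)}(\gamma(\tau))$ is a convex cone, its interior is stable under multiplication by positive scalars. In particular, multiplying $-\gamma'(\tau)$ by the positive scalar $1/\|\gamma'(\tau)\|$ keeps the vector inside the open cone, giving
\[
-\frac{\gamma'(\tau)}{\|\gamma'(\tau)\|}\in\mathrm{int\,}N_{\Omega(\tau)}(\gamma(\tau)).
\]
Combining this with the identification of $\mathrm{\sec}^{-}(\tau)$ above, we obtain the desired inclusion at every $\tau\in I$, which concludes the proof.

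There is essentially no hard step here: the substantive geometric work has already been carried out in Lemma~\ref{Lemma_ss2}, whose proof exploits both the continuity of $\gamma'$ (through the backward secant limit, via Lemma~\ref{Lemma_sec1}) and Claim~1 in the proof of Theorem~\ref{Theorem_orbit-foliation} to place the whole forward tail $\Omega(\tau)$ strictly inside an open half-space about $\gamma(\tau)$ with outward direction $-\gamma'(\tau)$. The only minor point worth mentioning is the scalar normalization, which is trivial because $\mathrm{int}\,N_{\Omega(\tau)}(\gamma(\tau))$ is an open convex cone.
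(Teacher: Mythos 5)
Your argument is correct and is essentially the paper's own proof: combine Lemma~\ref{Lemma_sec1} (which identifies $\mathrm{\sec}^{-}(\tau)$ with the single unit vector $-\gamma'(\tau)/\|\gamma'(\tau)\|$) with Lemma~\ref{Lemma_ss2} (which places $-\gamma'(\tau)$ in $\mathrm{int}\,N_{\Omega(\tau)}(\gamma(\tau))$), and use that the interior of the normal cone is stable under positive scaling. The only cosmetic difference is that the paper first passes to the arc-length parametrization to guarantee a nonvanishing derivative, a step your proof rightly skips since the no-stationary-point hypothesis already gives $\gamma'(\tau)\neq 0$ everywhere.
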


\begin{proof} Parameterizing the curve by its arc-length
parametrization we obtain a $C^{1}$ curve with nonvanishing
derivative and the same image $\Gamma=\gamma(I)$. This new curve
satisfies $\|\dot\gamma (s)\|=1$, for every $s\in
[0,\ell(\gamma)]$ and it is also a convex foliation orbit, see
Remark~\ref{Rem-arisd}. The result follows by combining
Lemma~\ref{Lemma_sec1} with Lemma \ref{Lemma_ss2}.
\end{proof}

\subsection{Polygonal approximations of smooth strongly self-contracted
curves}

\label{sec42} In this section we prove that every strongly
self-contracted $C^{1}$ curve is a limit of self-contracted
polygonal curves (with respect to the Hausdorff distance). Let us
recall the relevant definition.

\begin{definition}
[Polygonal approximation]Let $\gamma:I\rightarrow\mathbb{R}^{n}$ be
a continuous curve. A polygonal line
$P=\bigcup\limits_{k=0}^{m}[z_{k+1},z_{k}]$ is called polygonal
approximation of accuracy $\delta>0$ for the curve $\gamma$, if
\[
\{z_{k}\}_{k}\subset\gamma(I), \quad z_{k+1} \preceq z_{k}   \quad \text{ and } \quad d_{H}(P,\gamma(I))\leq
\delta.
\]
\end{definition}

In the above  we used the same notation $x\preceq y$ which corresponds to the order on the curve as in the beginning of the proof of Theorem \ref{main1}. In the sequel, bounded continuous self-contracted curves $\gamma
:I\rightarrow\mathbb{R}^{n}$ are considered to be extended to $\bar{I}$, so
that $\Gamma=\gamma(\bar{I})$ is a compact set.

\begin{proposition}
[Self-contracted polygonal approximations]\label{Prop_polyg}Let
$\gamma :[0,L]\rightarrow\mathbb{R}^{n}$ be a $C^{1}$ strongly
self-contracted curve. Then for every $\delta>0$ the curve $\gamma$
admits a self-contracted poly\-go\-nal approximation of accuracy
$\delta>0$.
\end{proposition}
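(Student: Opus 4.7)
I would start by reparameterizing $\gamma$ by arc-length so that $\|\dot\gamma\|\equiv 1$ on $[0,L]$ (the image is preserved, and strong self-contractedness is parameterization-free). The strategy is to pick a partition $0=s_0<s_1<\cdots<s_m=L$ of sufficiently small mesh, set $z_k:=\gamma(s_k)$, and verify that the polygonal line $P:=\bigcup_{k=0}^{m-1}[z_k,z_{k+1}]$ traversed from $z_0$ to $z_m$ is both self-contracted and $\delta$-close to $\gamma([0,L])$ in Hausdorff distance. The Hausdorff accuracy is automatic for a small enough mesh by uniform continuity of $\gamma$, so the substantive task is self-contractedness.

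A direct calculation based on the convexity of $x\mapsto d(x,q)$ along any segment shows that $P$ is self-contracted if and only if for every $0\le k<j\le m$
\[
\langle z_k-z_{k+1},\,z_j-z_{k+1}\rangle\le 0 \qquad(\ast)
\]
Indeed, $(\ast)$ is exactly the non-positivity of the left derivative of $d(\cdot,z_j)$ at $z_{k+1}$ along the segment $[z_k,z_{k+1}]$; convexity then yields non-increase on the whole segment, a convex-combination argument extends it to the case where the ``later'' reference point lies in the interior of some subsequent segment, and the remaining cases (three points on the same segment) are trivial. Equivalently, $(\ast)$ asserts that the unit backward secant $(z_k-z_{k+1})/\|z_k-z_{k+1}\|$ belongs to the normal cone $N_{\mathrm{conv}\{z_j:j\ge k+1\}}(z_{k+1})$.

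To leverage \emph{strong} self-contractedness, I would exploit the inclusion $\mathrm{conv}\{z_j:j\ge k+1\}\subset\Omega(s_{k+1})$ together with $z_{k+1}\in\mathrm{conv}\{z_j:j\ge k+1\}$, which forces $N_{\Omega(s_{k+1})}(z_{k+1})\subset N_{\mathrm{conv}\{z_j:j\ge k+1\}}(z_{k+1})$ (smaller set, larger normal cone). By Lemma \ref{Lemma_sec1}, $\mathrm{sec}^{-}(s_{k+1})=\{-\dot\gamma(s_{k+1})\}$, and strong self-contractedness places this vector in the \emph{interior} of $N_{\Omega(s_{k+1})}(\gamma(s_{k+1}))$. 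On the other hand, a first-order Taylor expansion combined with uniform continuity of $\dot\gamma$ shows that the backward secant $(z_k-z_{k+1})/\|z_k-z_{k+1}\|$ approximates $-\dot\gamma(s_{k+1})$ with error controlled by the modulus of continuity of $\dot\gamma$ on $[s_k,s_{k+1}]$, uniformly in $k$. Hence, once a \emph{uniform depth} $c>0$ is available, satisfying $B(-\dot\gamma(\tau),c)\cap\mathbb{S}^{n-1}\subset N_{\Omega(\tau)}(\gamma(\tau))$ for every $\tau\in[0,L]$, I can choose the mesh so small that every backward secant stays within this ball and hence in the (larger) cone appearing in $(\ast)$; condition $(\ast)$ then holds at every vertex and $P$ is self-contracted.

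The main obstacle is precisely establishing this uniform depth. I would argue by contradiction: if no such $c$ exists, extract sequences $\tau_n\to\tau^*\in[0,L]$, $y_n\in\Omega(\tau_n)$, and unit vectors $v_n\to -\dot\gamma(\tau^*)$ with $\langle v_n,y_n-\gamma(\tau_n)\rangle\ge 0$. Using Hausdorff-continuity of $\tau\mapsto\Omega(\tau)$ (a consequence of uniform continuity of $\gamma$ on the compact interval $[0,L]$), and splitting cases according to whether $y_n$ clusters away from $\gamma(\tau^*)$ or converges to it, one contradicts the strict quantitative form $\langle-\dot\gamma(\tau^*),y-\gamma(\tau^*)\rangle\le-\kappa\|y-\gamma(\tau^*)\|$ (for some $\kappa>0$), valid for all $y\in\Omega(\tau^*)$, that expresses $-\dot\gamma(\tau^*)\in\mathrm{int}\,N_{\Omega(\tau^*)}(\gamma(\tau^*))$. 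This compactness step is the delicate heart of the argument; once it is in hand, selecting a mesh small enough to simultaneously control the $\dot\gamma$-oscillation below $c$ and the Hausdorff error below $\delta$ completes the proof.
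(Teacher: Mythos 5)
Your plan is sound and reaches the conclusion by a genuinely different route from the paper's. The paper fixes no mesh in advance: starting from $z_0=\gamma(T_0)$ it chooses each vertex greedily, taking $T_{k+1}$ to be the \emph{infimum} of those $t^{-}\in[T_k-\delta,T_k)$ whose backward secant lies in $\mathrm{int}\,N_{\Omega(T_k)}(\gamma(T_k))$; self-contractedness of the polygon is then essentially immediate from $(z_{k+1}-z_k)\in N_{\Omega(T_k)}(\gamma(T_k))$ and the nesting $\Omega(T_0)\cup\bigcup_k[z_{k+1},z_k]\subset\Omega(T_{m+1})$, and all the analytic work ($C^1$ regularity plus \eqref{str-SC}) goes into proving that the greedy procedure terminates in finitely many steps. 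You instead take a uniform mesh --- so termination is free --- and concentrate the work in the ``uniform depth'' estimate $B(-\dot\gamma(\tau),c)\cap\mathbb{S}^{n-1}\subset N_{\Omega(\tau)}(\gamma(\tau))$ for all $\tau$. That estimate is true and your compactness scheme can be made to work, but note that the case you flag as delicate ($y_n\to\gamma(\tau^*)$) is precisely where the paper's Taylor-expansion argument resurfaces: the quantitative form $\langle-\dot\gamma(\tau^*),y-\gamma(\tau^*)\rangle\le-\kappa\|y-\gamma(\tau^*)\|$ cannot be invoked in the limit because the limit point is $\gamma(\tau^*)$ itself. The clean way through is to observe first that, by linearity of $y\mapsto\langle v,y-\gamma(\tau)\rangle$ and the triangle inequality, the depth estimate over all of $\Omega(\tau)$ follows from the same estimate restricted to $y=\gamma(t)$, $t\ge\tau$; then split $\{(\tau,t):\tau\le t\}$ into a near-diagonal band $t-\tau\le\eta$, where $(\gamma(t)-\gamma(\tau))/\|\gamma(t)-\gamma(\tau)\|$ is uniformly close to $\dot\gamma(\tau)$ by uniform continuity of $\dot\gamma$ under the arc-length parametrization, and the complementary compact set, where injectivity of $\gamma$ and \eqref{str-SC} make the relevant continuous function strictly negative, hence bounded away from $0$. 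With that lemma supplied, your reduction to condition $(\ast)$ at the vertices is correct, and your construction yields the approximation in one shot; the paper's version uses \eqref{str-SC} only along the constructed vertices but must pay for it with the termination argument.
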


\begin{proof} Since the statement is independent of the
parametrization, there is no loss of generality in assuming that the
curve is parameterized by arc-length. In
particular, $\|{\gamma}'(t)\|=1$ for all $t\in\lbrack0,L]$ where
$L$ is the total length of the curve. Moreover,
\eqref{str-sc} holds.

We shall show that for every $\delta>0$ and every $T_{0}\in(0,L]$
there exists a polygonal approximation
$P_{\delta}(T_{0})=\bigcup\limits_{k=0}^{m} [z_{k+1},z_{k}]$ of the
curve $\gamma([T_{0},L])$ of accuracy $\delta>0.$ Indeed, let
$T_{0}\in(0,L]$ and set $z_{0}=\gamma(T_{0}).$ In view of
\eqref{str-sc} and Lemma~\ref{Lemma_sec1} there exists
$t^{-}\in\lbrack T_{0}-\delta,T_{0})$ such that
\begin{equation}
\frac{\gamma(t^{-})-\gamma(T_{0})}{\|\gamma(t^{-})-\gamma(T_{0})\|}
\in\mathrm{int\,}N_{\Omega(T_0)}(\gamma(T_{0})).\label{ad3}
\end{equation}
Let $T_{1}>0$ be the infimum of all $t^{-}\in\lbrack T_{0}-\delta,T_{0})$ such
that \eqref{ad3} holds, and set $z_{1}=\gamma(T_{1}).$ Notice that
$(z_{1}-z_{0})\in N_{\Omega(T_0)}(\gamma(T_{0})),$ which yields that for any
$x\in\Omega(T_{0}),$ the function
\[
t\mapsto d(x,z_{0}+t(z_{1}-z_{0}))
\]
is increasing. Thus the curve $[z_{1},z_{0}]\cup\{\gamma(t):t\geq T_{0}\}$ is
self-contracted. Since $t\mapsto\gamma(t)$ is the length parametrization, it
follows that $\|z_{1}-z_{0}\|\leq|T_{1}-T_{0}|\leq\delta$ whence
\[
d_{H}([z_{1},z_{0}],\gamma([T_{1},T_{0}]))\leq\delta.
\]
Notice finally that
\[
\lbrack z_{1},z_{0}]\cup\Omega(T_{0})\subset\Omega(T_{1}).
\]
Repeating the above procedure, we build after $m$ iterations, a
 decreasing sequence $T_{0}>T_{1}>\ldots>T_{m+1}$ and points
$z_{k} =\gamma(T_{k}),$ such that
$\bigcup\limits_{k=0}^{m}[z_{k+1},z_{k}]$ is a self-contracted
polygonal approximation of the curve $\{\gamma(t):t\in\lbrack
T_{m+1},T_{0}]\}$ of precision $\delta>0$. Notice also that $\Omega(T_0) \cup \bigcup\limits_{k=0}^{m}[z_{k+1},z_{k}] \subset \Omega(T_{m+1})$.

The proof will be complete if we show that with this procedure we
reach the initial point $t=0$ in a finite number of iterations,
\emph{i.e. }$T_{m+1}=0$ for some $m\geq1$. Let us assume this is not
the case. Then the aforementioned procedure gives a
decreasing sequence $\{T_{m}\}_{m}\searrow T$, where $T\ge 0$. Set
$z=\gamma(T)$ and $z_{m}=\gamma(T_{m})$ and notice that  $\{z_{m}
\}\rightarrow z.$ We may assume that $\{z_{m}\}_{m\geq m_{0}}\subset
B(z,\delta)$. It follows that for every $m\geq m_{0}$ there exists
$s_{m}>0$ and $x_{m}:=\gamma(T_{m}+s_{m})$ such that
\begin{equation}
\langle\frac{z-z_{m}}{\|z-z_{m}\|},\frac{x_{m}-z_{m}}{\|x_{m}-z_{m}\|}
\rangle>0. \label{ad4}
\end{equation}
(If \eqref{ad4} were not true, then we could have taken $z_{m+1}=z$ and
$T_{m+1}=T$ a contradiction to the definition of $T_{m+1}$ as an infimum.) Let
us now assume that a subsequence of $\{x_{m}\}_{m}$ remains away of $z$. Then
taking a converging sub-subsequence $\{z_{k_{m}}\}_{m}$ and passing to the
limit in \eqref{ad4} as $m\rightarrow\infty,$ we obtain that $\langle
\gamma'(T),u\rangle\geq0$, for some unit vector $u$ in  the cone over $\Omega(T)-z$,
which contradicts \eqref{str-SC}. It follows that $s_{m}\rightarrow0$ and
$\{x_{m}\}_{m}\rightarrow z$.

For the rest of the proof, let us assume for simplicity that $z=0$ and
$\gamma'(T)=e_{n}=(0,\ldots,1)$. (There is no loss of generality in doing
so.) Since $\gamma$ is a $C^{1}$ curve, it follows that for any $\varepsilon
>0$, there exists $\tau>0$ such that for all $t\in(T,T+\tau)$ we have
\begin{equation}
1\geq\langle
e_{n},\gamma'(t)\rangle\geq1-\varepsilon\label{ad5}
\end{equation}
and
\begin{equation}
\Big\| e_{n}-\frac{\gamma(t)}{\|\gamma(t)\|} \Big\|\leq \varepsilon.
\label{ad6}
\end{equation}
Pick any $\bar{m}$ sufficiently large so that $T_{\bar{m}}+s_{\bar{m}}<\tau,$
set $\bar{z}=z_{\bar{m}}=\gamma(T_{\bar{m}})$ and $\bar{x}=x_{\bar{m}}
=\gamma(T_{\bar{m}}+s_{\bar{m}}).$ Since $\bar{x},\bar{z}$ satisfy
\eqref{ad4} and \eqref{ad6} we deduce that
\[
\langle\frac{\bar{z}}{\|\bar{z}\|},\frac{\bar{x}-\bar{z}}{\|\bar{x}-\bar{z}
\|}\rangle<0\text{\quad and\quad}\|e_{n}-\frac{\bar{z}}{\|\bar{z}
\|}\|<\varepsilon,
\]
hence $\langle e_n, \frac{\bar{x}-\bar{z}}{\|\bar{x}-\bar{z}
\|} \rangle < \varepsilon$, and the last coordinates satisfy
\begin{equation}
\bar{x}^{n}-\bar{z}^{n}<\varepsilon\|\bar{x}-\bar{z}\|\,=\varepsilon
\,\|\gamma(T_{\bar{m}}+s_{\bar{m}})-\gamma(T_{\bar{m}})\|\,\leq\,\varepsilon
s_{\bar{m}}. \label{ad7}
\end{equation}
On the other hand, using \eqref{ad5} we deduce
\begin{equation}
\bar{x}^{n}-\bar{z}^{n}=\int_{0}^{s_{\bar{m}}}\langle e_{n},gamma'
(T_{\bar{m}}+s)\rangle ds\geq(1-\varepsilon)s_{\bar{m} }.
\label{ad8}
\end{equation}
Relations \eqref{ad7}, \eqref{ad8} are incompatible for $\varepsilon<1/2,$
leading to a contradiction.

This proves that for some $m$ we get $T_{m+1}=0$ and $z_{m+1}=\gamma(0)$. The
proof is complete.
\end{proof}

In view of Corollary~\ref{Cor_strongSC} we thus obtain.

\begin{corollary}
Every $C^{1}$ convex foliation orbit admits self-contracted polygonal
approximations of arbitrary accuracy.
\end{corollary}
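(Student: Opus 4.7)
The plan is to deduce this corollary by direct concatenation of Corollary~\ref{Cor_strongSC} and Proposition~\ref{Prop_polyg}, mediated by a preliminary arc-length reparameterization. Both the image $\Gamma=\gamma(I)$ and the order $\preceq$ that enter the definition of a polygonal approximation are invariant under orientation-preserving reparameterizations, so nothing is lost by working with the arc-length version of the orbit.

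First, given a $C^{1}$ convex foliation orbit $\gamma:I\to\mathbb{R}^{n}$, I would invoke Theorem~\ref{Theorem_orbit-foliation} to conclude that $\gamma$ is self-contracted, and Theorem~\ref{main1} to conclude that it has finite length $L:=\ell(\gamma)$. Following Remark~\ref{Rem-arisd}, I would then pass to the arc-length reparameterization $\tilde{\gamma}:[0,L]\to\mathbb{R}^{n}$, which has the same image $\Gamma$ as $\gamma$ and satisfies $\|\tilde{\gamma}'(s)\|=1$ almost everywhere. Once one knows that $\tilde{\gamma}$ is $C^{1}$ (this is automatic in the setting of Corollary~\ref{Cor_strongSC}, where no stationary point is present), the reparameterized curve is a $C^{1}$ convex foliation orbit without stationary points, so Corollary~\ref{Cor_strongSC} identifies it as a $C^{1}$ strongly self-contracted curve.

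Second, I would apply Proposition~\ref{Prop_polyg} to $\tilde{\gamma}$: for every prescribed accuracy $\delta>0$, it yields a self-contracted polygonal line $P_{\delta}$ whose vertices lie on $\tilde{\gamma}([0,L])=\Gamma$ in the order induced by $\tilde{\gamma}$, and which satisfies $d_{H}(P_{\delta},\Gamma)\leq\delta$. Because $\gamma$ and $\tilde{\gamma}$ have identical images and identical total orders, $P_{\delta}$ is simultaneously a self-contracted polygonal approximation of the original orbit $\gamma$ of accuracy $\delta$, and the arbitrariness of $\delta$ gives the desired conclusion.

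The only subtle point in this plan is the $C^{1}$ regularity of the arc-length reparameterization: if $\gamma'$ vanishes at some $t_{0}\in I$, the inverse of the length function $L(t)=\int_{0}^{t}\|\gamma'(\tau)\|\,d\tau$ is not automatically $C^{1}$ at $L(t_{0})$, and one must verify that $\tilde{\gamma}$ remains $C^{1}$ in order to invoke Corollary~\ref{Cor_strongSC}. This is the only technical nuisance; once it is handled (or once one restricts to $C^{1}$ orbits without stationary points, which are precisely the ones covered by the preceding corollary), the proof reduces to a straightforward two-line chaining of the two earlier results and requires no new idea.
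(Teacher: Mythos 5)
Your proposal matches the paper's own (one-line) proof exactly: the corollary is obtained by chaining Corollary~\ref{Cor_strongSC} with Proposition~\ref{Prop_polyg}, the arc-length reparameterization being carried out inside the proof of Corollary~\ref{Cor_strongSC} itself. The subtlety you flag is real --- the final statement silently drops the ``no stationary point'' hypothesis present in Corollary~\ref{Cor_strongSC} --- but otherwise your argument is precisely the intended one.
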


\subsection{Convergence of the proximal algorithm}

\label{Section_proximal}The aim of this section is to provide a
diffe\-rent type of application of the notion of self-contracted
curves. We recall that for a (nonsmooth) convex function
$f:\mathbb{R}^{n}\rightarrow\mathbb{R}$ which is bounded from below,
an initial point $x_{0}\in\mathbb{R}^{n}$ and a sequence
$\{t_{i}\}_{i}\subset(0,1]$, the algorithm defines a sequence
$\{x_{i}\}_{i\geq 0}\subset\mathbb{R}^{n}$ called \em proximal
sequence \em according to the iteration scheme, see
\cite[Definition~1.22]{Rock98} for example:

Given $x_{i}$, define $x_{i+1}$ as the (unique) solution of the following
(strongly convex, coercive) minimization problem
\begin{equation}
\min_{x}\,\left\{  f(x)+\frac{1}{2t_{i}}\|x-x_{i}\|^{2}\right\}
.\label{1r}
\end{equation}
A necessary and sufficient optimality condition for the above problem is
\[
0\in\partial f(x_{i+1})+t_{i}^{-1}(x_{i+1}-x_{i})
\]
or equivalently,
\begin{equation}
\frac{x_{i+1}-x_{i}}{t_{i}}\in -\partial f(x_{i+1}),\label{2r}
\end{equation}
where $\partial f$ is the Fenchel subdifferential of $f$. Notice
that \eqref{2r} can be seen as an implicit discretization of the
subgradient system \eqref{subgrad} or of the gradient system
\eqref{diffeq}, in case $f$ is smooth.

The proximal algorithm has been introduced in \cite{Martinez}. We
refer to \cite{lemaire} for a geometrical interpretation and to
\cite{patrick} for extensions to nonconvex problems. Let us now
recall from \cite[Section~2.4]{BDLM} the following important facts
(we give the short proof for completeness):

\begin{lemma}
[Geometrical interpretation]\label{Lemma_proj}Let $\{x_{i}\}_{i \geq
0}$ be a proximal sequence for a convex function $f$ and set
$r_i:=f(x_i)$. Then:

\begin{itemize}
\item[(i)] $f(x_{i+1}) < f(x_i)$ if $x_{i+1} \neq x_i$ ;
\item[(ii)] the point $x_{i+1}$ is the shortest distance projection of $x_{i}$ on the
sublevel set $[f\leq r_{i+1}]$.
\end{itemize}
\end{lemma}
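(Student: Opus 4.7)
The plan is to exploit the variational characterization of $x_{i+1}$ as the unique minimizer of the strongly convex functional $\Phi_i(x) := f(x) + \frac{1}{2t_i}\|x - x_i\|^2$. Both conclusions of the lemma reduce to a one-line comparison using this optimality property, so I expect no serious obstacle; the whole argument occupies just a few lines.

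For part (i), I would compare $\Phi_i$ at its minimizer $x_{i+1}$ with its value at the competitor $x_i$. Since $\Phi_i(x_{i+1}) \le \Phi_i(x_i) = f(x_i)$, rearranging gives
\[
f(x_{i+1}) \le f(x_i) - \frac{1}{2t_i}\|x_{i+1}-x_i\|^2.
\]
Whenever $x_{i+1} \neq x_i$, the penalty term is strictly positive, which already yields $f(x_{i+1}) < f(x_i)$.

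For part (ii), let $y$ be any point of the sublevel set $[f\le r_{i+1}]$, so $f(y)\le f(x_{i+1})$. The minimality of $x_{i+1}$ in \eqref{1r} applied at the competitor $y$ reads
\[
f(x_{i+1}) + \frac{1}{2t_i}\|x_{i+1}-x_i\|^2 \;\le\; f(y) + \frac{1}{2t_i}\|y - x_i\|^2,
\]
and since $f(y) - f(x_{i+1}) \le 0$ this simplifies to $\|x_{i+1}-x_i\|^2 \le \|y-x_i\|^2$. Because $x_{i+1}$ itself lies in $[f\le r_{i+1}]$ and this set is closed and convex (by convexity of $f$), $x_{i+1}$ is indeed the (unique) nearest point projection of $x_i$ onto $[f\le r_{i+1}]$, completing the proof.
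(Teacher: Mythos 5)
Your proof is correct and follows essentially the same route as the paper: part (i) by testing the minimality of $x_{i+1}$ against the competitor $x_i$, and part (ii) by testing it against an arbitrary $x\in[f\leq r_{i+1}]$ and cancelling $f(x_{i+1})=r_{i+1}$. The only (harmless) addition is your closing remark on closedness, convexity and uniqueness of the projection, which the paper leaves implicit.
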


\begin{proof} Assertion (i) is obvious, since $x_{i+1}$ minimizes
\eqref{1r}, thus for $x=x_{i}$ we get
\[
f(x_{i+1})+\frac{1}{2t_{i}}\|x_{i+1}-x_{i}\|^{2}\leq f(x_{i}).
\]
Assertion (ii) follows again from \eqref{1r}, by considering $x\in\lbrack
f\leq r_{i+1}].$ Indeed:
\[
f(x_{i+1})+\frac{1}{2t_{i}}\|x_{i+1}-x_i\|^{2}\leq
f(x)+\frac{1}{2t_{i}} \|x-x_i\|^{2}\leq
r_{i+1}+\frac{1}{2t_{i}}\|x-x_i\|^{2}.
\]
Since $r_{i+1}=f(x_{i+1})$ the above yields
\[
\|x_{i+1}-x_i\|\leq\|x-x_i\|,\text{ for all }x\in\lbrack f\leq r_{i+1}].
\]
The proof is complete.
\end{proof}

We obtain the following interesting consequence.

\begin{proposition}
[Self-contractedness of the proximal algorithm]\label{Prop_sc}Let
$\{x_{i}\}_{i \geq 0}$ be a proximal sequence for a convex function
$f$. Then the polygonal curve
\begin{equation}
P=\bigcup\limits_{i\in\mathbb{N}}[x_{i},x_{i+1}]\label{poly}
\end{equation}
is a self-contracted curve.
\end{proposition}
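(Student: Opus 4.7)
I would parameterize the polygonal curve $P$ as a continuous piecewise linear map $\gamma:[0,\infty)\to\mathbb{R}^{n}$ with $\gamma(i)=x_{i}$ and $\gamma$ affine on each interval $[i,i+1]$. To verify self-contractedness, I fix $t_{1}\le t_{2}\le t_{3}$ and aim to show that
\[
\phi(t):=\|\gamma(t)-\gamma(t_{3})\|
\]
is nonincreasing on $[0,t_{3}]$. Since $\phi$ is automatically continuous, this reduces to proving monotonicity of $\phi$ separately on each subinterval $[i,i+1]$ with $i+1\le \lceil t_{3}\rceil$.

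The core analytic tool is a classical property of metric projections, which will be the one nontrivial step: if $K\subset\mathbb{R}^{n}$ is closed and convex, $p=P_{K}(x)$, and $y\in K$, then
\[
s\longmapsto\|(1-s)x+sp-y\|
\]
is nonincreasing on $[0,1]$. The proof is short: the squared distance is a quadratic in $s$ with nonnegative leading coefficient, and its derivative at $s=0$ and at $s=1$ is nonpositive by the variational inequality $\langle x-p,y-p\rangle\le0$; since a quadratic's derivative is monotone, it remains nonpositive throughout $[0,1]$.

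Now I invoke Lemma~\ref{Lemma_proj}. Setting $r_{k}=f(x_{k})$, the sequence $\{r_{k}\}$ is nonincreasing and $x_{i+1}$ is the projection of $x_{i}$ onto the convex sublevel set $K_{i+1}:=[f\le r_{i+1}]$. Let $j$ be the integer with $t_{3}\in[j,j+1]$, so that $\gamma(t_{3})$ is a convex combination of $x_{j}$ and $x_{j+1}$. For any $i<j$, the inclusions $x_{j}\in K_{j}\subset K_{i+1}$ and $x_{j+1}\in K_{j+1}\subset K_{i+1}$ give $\gamma(t_{3})\in K_{i+1}$ by convexity of sublevel sets. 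Applying the projection property with $x=x_{i}$, $p=x_{i+1}$, $y=\gamma(t_{3})$, $K=K_{i+1}$ yields monotonicity of $\phi$ on $[i,i+1]$. On the final segment $[j,j+1]$, the points $\gamma(t)$ and $\gamma(t_{3})$ both lie on $[x_{j},x_{j+1}]$, so monotonicity of $\phi$ on $[j,t_{3}]$ is one dimensional and immediate.

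Gluing these monotonicities across consecutive subintervals (using continuity of $\phi$ at integer times) gives $\phi$ nonincreasing on $[0,t_{3}]$, i.e. $\|\gamma(t_{1})-\gamma(t_{3})\|\ge\|\gamma(t_{2})-\gamma(t_{3})\|$, which is exactly the self-contracted inequality \eqref{SC-ineq}. The main conceptual obstacle is just the projection-contraction lemma quoted above; once it is in hand, the proof is a direct assembly using the nesting of sublevel sets, which is what guarantees that each later iterate (and therefore the point $\gamma(t_{3})$ obtained as a convex combination of two such iterates) lies in the sublevel set on which the current segment performs its projection step.
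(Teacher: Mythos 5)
Your proposal is correct and follows essentially the same route as the paper's: the key ingredient in both is the monotone--distance property of a projection segment (the paper's observation that $t\mapsto\|x_{m+1}+t(x_m-x_{m+1})-x\|$ is increasing for every $x\in[f\le r_{m+1}]$, which is exactly your projection lemma) combined with the nesting of the sublevel sets coming from Lemma~\ref{Lemma_proj}. You simply carry out explicitly, via the piecewise-linear parameterization and the inclusion $\gamma(t_3)\in K_{i+1}$, the ``simple argument'' that the paper leaves to the reader after establishing the vertex-to-vertex inequality \eqref{forGuy}.
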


\begin{proof} Let $i_{1}<i_{2}<i_{3}.$ Notice first that for every $x\in\lbrack f\leq r_{m+1}],$ the function
\[
t\mapsto \|x_{m+1}+t(x_{m}-x_{m+1}) -x\|
\]
is increasing because the sublevel sets are convex.  Now Lemma~\ref{Lemma_proj} yields that for all $i_{1}\leq m<i_{2}$,  $x_{i_{3}}\in\lbrack f\leq
f(x_{m+1})]$  and
\[
\|x_{m+1}-x_{i_{3}}\|\leq\|x_{m}-x_{i_{3}}\|,
\]
which inductively yields
\begin{equation}
\|x_{i_{2}}-x_{i_{3}}\|\leq\|x_{i_{1}}-x_{i_{3}}\|. \label{forGuy}
\end{equation}
A simple argument now shows that
the polygonal curve is self-contracted.
\end{proof}

Combining the above with the length bound given in \eqref{mp-1} (in
view of Remark~\ref{Rem_improve}) we obtain the following
convergence result.

\begin{theorem}
[Convergence of the proximal algorithm]\label{thm_prox}Let
$f:\mathbb{R} ^{n}\rightarrow\mathbb{R}$ be convex and bounded from
below. Let $x_{0} \in\mathbb{R}^{n}$ be an initial point for the
proximal algorithm with parameters $\{t_{i}\}_{i}\subset(0,1]$. Then
the proximal sequence $\{x_{i}\}_{i}$ converges to some point
$x_{\infty}\in\mathbb{R}^{n}$ with a fast rate of convergence. That
is, there exists $c>0$, depending only on the dimension,
such that
\[
\sum_{i \in\mathbb{N}}\|x_{i}-x_{i+1}\|\leq c\|x_{0}-x_{\infty}\|.
\]

\end{theorem}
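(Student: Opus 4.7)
The plan is to combine Proposition~\ref{Prop_sc}, which tells us that the polygonal curve
\[
P \;=\; \bigcup_{i\geq 0}[x_i,x_{i+1}]
\]
is self-contracted, with the length bound for self-contracted curves established earlier. Being piecewise linear, $P$ is continuous and its total variation equals $\sum_{i}\|x_{i+1}-x_i\|$. Via Remark~\ref{Rem_improve} --- that is, reparameterize $P$ by arc length using Proposition~\ref{Prop_hidden}, reverse orientation to a self-expanded curve by Lemma~\ref{reverse}, and apply the sharp bound \eqref{mp-1} of Corollary~\ref{main} --- I obtain a dimensional constant $c_1=c_1(n)$ with
\[
\sum_{i\geq 0}\|x_{i+1}-x_i\| \;=\; \ell(P) \;\leq\; c_1\,\mathrm{diam}(P).
\]

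Second, I would bound $\mathrm{diam}(P)$ by $2\|x_0-x_\infty\|$. Applying self-contractedness \eqref{SC-ineq} to $0\preceq x_i\preceq x_\infty$ (where $x_\infty$ is provided by Remark~\ref{scmetric}(ii)) gives $\|x_i-x_\infty\|\leq\|x_0-x_\infty\|$ for every $i\geq 0$. Any point $p\in[x_i,x_{i+1}]$ is a convex combination of its endpoints, so convexity of the norm yields $\|p-x_\infty\|\leq\|x_0-x_\infty\|$; two triangle inequalities then give $\|p-q\|\leq 2\|x_0-x_\infty\|$ for all $p,q\in P$. Combining with the length bound produces the desired estimate with $c=2c_1(n)$. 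The convergence of $\{x_i\}$ is then automatic from finiteness of $\sum\|x_{i+1}-x_i\|$, which makes $\{x_i\}$ Cauchy.

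The main obstacle is a mild circularity: applying the length bound non-vacuously requires $P$ to be bounded, which is essentially the very convergence we seek. I would address this by working first with the finite truncations $P_N=\bigcup_{i<N}[x_i,x_{i+1}]$ (which are trivially bounded), deducing $\ell(P_N)\leq c_1\,\mathrm{diam}(P_N)\leq 2c_1\,\sup_{j\leq N}\|x_0-x_j\|$ from self-contraction of the vertices, and then bootstrapping via the proximal descent estimate $\|x_{j+1}-x_j\|^2\leq 2t_j(f(x_j)-f(x_{j+1}))$ to control $\sup_{j\leq N}\|x_0-x_j\|$ uniformly in $N$, allowing us to pass to the limit and identify $x_\infty$.
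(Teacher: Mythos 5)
Your main line of argument is exactly the paper's: Proposition~\ref{Prop_sc} makes $P$ self-contracted, Remark~\ref{Rem_improve} upgrades the length bound to $\ell(P)\leq C_n\,\mathrm{diam}\,K$, and letting the third index tend to infinity in the self-contractedness inequality \eqref{forGuy} gives $\|x_i-x_\infty\|\leq\|x_0-x_\infty\|$, hence $\mathrm{diam}\leq 2\|x_0-x_\infty\|$ and $c=2C_n$. Up to that point you have reproduced the paper's proof.

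The issue is your final paragraph. You are right that there is a circularity to address: the length bound is only non-vacuous once $\{x_i\}$ is known to be bounded, and the identification of $x_\infty$ presupposes convergence. But the repair you propose does not work. The truncation step gives $\ell(P_N)\leq 2c_1\sup_{j\leq N}\|x_0-x_j\|$, while trivially $\sup_{j\leq N}\|x_0-x_j\|\leq\ell(P_N)$, so the two inequalities chase each other and yield nothing unless $2c_1<1$. And the descent estimate you invoke only gives $\sum_j\|x_{j+1}-x_j\|^2\leq 2\bigl(f(x_0)-\inf f\bigr)$, which is a bound on the \emph{sum of squares}; it does not control $\sup_j\|x_0-x_j\|$ and cannot, because under the stated hypotheses the proximal sequence may genuinely be unbounded: for $f(x)=e^{-x}$ on $\mathbb{R}$ (convex, bounded below, no minimizer) and $t_i\equiv 1$, the iteration reads $x_{i+1}=x_i+e^{-x_{i+1}}$, which increases to $+\infty$. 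So no argument from ``bounded below'' alone can close the gap. What does close it is an extra hypothesis such as $\operatorname{argmin}f\neq\emptyset$: then Lemma~\ref{Lemma_proj}(ii) shows each $x_{i+1}$ is the projection of $x_i$ onto a sublevel set containing any minimizer $x^\ast$, whence $\|x_{i+1}-x^\ast\|\leq\|x_i-x^\ast\|$ and the sequence is bounded; after that, finiteness of $\ell(P)$ makes $\{x_i\}$ Cauchy and the rest of your argument (and the paper's) goes through. To be fair, the paper's own proof silently assumes boundedness at exactly the same spot (``where $K$ is any compact convex set containing $\{x_i\}_i$''), so you have spotted a real weakness; you just have not repaired it.
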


\begin{proof} By Proposition~\ref{Prop_sc} the polygonal
curve $P$ defined in \eqref{poly} is self-contracted. Consequently,
by Remark~\ref{Rem_improve} we know that \eqref{mp-1} applies,
yielding
\[
\ell(P)=\sum_{n\in \mathbb{N}}\|x_{i}-x_{i+1}\|\leq C_{n}\,\mathrm{diam\,}K,
\]
where $K$ is any compact convex set containing $\{x_{i}\}_{i}$. In addition, letting $i_1=0$ and $i_3\to +\infty$ in \eqref{forGuy} we   get
$$\|x_i-x_\infty \|\leq \|x_0-x_\infty\|,$$
for all $i \in \mathbb{N}$, which implies that
$\{x_{i}\}_{i}\subset
\overline{B}(x_{\infty},R)$ with $R=\|x_0-x_\infty\|$ and the result follows for $c=2C_{n}$.
\end{proof}

Theorem~\ref{thm_prox} guarantees the fast convergence of the
proximal sequence $\{x_{i}\}_{i}$ towards a limit point, with a
convergence rate independent of the choice of the parameters
$\{t_{i}\}_{i}.$ Notice however that for an arbitrary choice of
parameters, the limit point might not be a critical point of $f$ as
shows the following example:

\begin{example}
[Convergence to a noncritical point]Consider the (coercive, $C^{1}$) convex
function $f:\mathbb{R}\rightarrow\mathbb{R}$ defined by
\[
f(x)=\left\{
\begin{array}
[c]{cc}
x^{2}, & |x|\leq1/2\\
|x|-\frac{1}{4}, & |x|\geq1/2
\end{array}
\right.
\]
Then the proximal algorithm $\{x_{i}\}_{i}$, initialized at the point
$x_{0}=2$ and corresponding to the parameters $t_{i}=1/2^{i+1}$ ,
$i\in\mathbb{N}$, converges to the noncritical point $\bar{x}=1.$
\end{example}

In the above example, the choice of the parameters $\{t_{i}\}$ in
the proximal algorithm has the drawback that
$\sum_{i}t_{i}<+\infty$. In practice the choice of the step-size
parameters $\{t_{i}\}_{i}$ is crucial to obtain the convergence of
the sequence $\{f(x_{i})\}_{i}$ towards a \emph{critical value}; a
standard choice for this is any sequence satisfying $\sum
t_{i}=+\infty$, see for instance \cite{Guler}.

\bigskip

-------------------------------------------------------------

\noindent Aris Daniilidis

\smallskip

\noindent Departament de Matem\`{a}tiques, C1/308\newline Universitat
Aut\`{o}noma de Barcelona\newline E-08193 Bellaterra, Spain

\smallskip

\noindent E-mail: \texttt{arisd@mat.uab.es} \newline\noindent
\texttt{http://mat.uab.es/\symbol{126}arisd}

\smallskip

\noindent Research supported by the grant MTM2011-29064-C01 (Spain).

\bigskip

\noindent Guy David

\smallskip

\noindent Laboratoire de math\'{e}matiques, UMR 8628, B\^{a}timent 425 \newline
 Universit\'{e}
Paris-Sud, F-91405 Orsay Cedex, France \newline et Institut
Universitaire de France

\smallskip

\noindent E-mail: \texttt{Guy.David@math.u-psud.fr}\newline\noindent
\texttt{http://www.math.u-psud.fr/\symbol{126}gdavid/}

\bigskip

\noindent Estibalitz Durand-Cartagena

\smallskip

\noindent Departamento de Matem\'{a}tica Aplicada\newline ETSI
Industriales, UNED\newline Juan del Rosal 12, Ciudad Universitaria,
E-28040 Madrid, Spain

\smallskip

\noindent E-mail: \texttt{edurand@ind.uned.es}\newline\noindent
\texttt{https://dl.dropbox.com/u/20498057/Esti/Home\_Page.html}

\bigskip

\noindent Antoine Lemenant

\smallskip

\noindent Laboratoire Jacques Louis Lions (CNRS UMR 7598)\newline
Universit\'{e} Paris 7 (Denis Diderot)\newline175, rue du
Chevaleret, F-75013 Paris, France

\smallskip

\noindent E-mail: \texttt{lemenant@ljll.univ-paris-diderot.fr}\\
\noindent\texttt{http://www.ann.jussieu.fr/\symbol{126}lemenant/}

\end{document}